\newtheorem{theorem}{Theorem}[section]
\theoremstyle{plain}
\newtheorem{definition}[theorem]{Definition}
\newtheorem{construction}[theorem]{Construction}
\newtheorem{proposition}[theorem]{Proposition}
\newtheorem{observation}[theorem]{Observation}
\newtheorem{question}[theorem]{Question}
\author{Gerold J\"ager 
	\and Klas Markstr\"om 
	\and Denys Shcherbak 	\and Lars-Daniel \"Ohman 	
	}
\title{Small Youden Rectangles, Near Youden Rectangles, and Their Connections to Other Row-Column Designs}
\affiliation{Department of Mathematics and Mathematical Statistics, Ume\aa{\ }University, Sweden}
\keywords{Youden squares, block designs, row-column designs}
\begin{document}

\publicationdata{vol. 25:1}{2023}{9}{10.46298/dmtcs.6754}{2020-09-02; 2020-09-02; 2022-05-16; 2022-11-01}{2023-01-05}

%----------------------------------------------------------------------
%----------------------------------------------------------------------

%----------------------------------------------------------------------
%----------------------------------------------------------------------

\maketitle

\begin{abstract}
	In this paper we first study $k \times n$ Youden rectangles of small 
	orders. We have enumerated all Youden rectangles for a range of small 
	parameter values, excluding the almost square cases where 
	$k = n-1$, in a large scale computer search. In particular, we
	verify the previous counts for $(n,k) = (7,3), (7,4)$, 
	and extend this to the cases $(11,5), (11,6), (13,4)$ and $(21,5)$.
	
	For small parameter values where no Youden rectangles exist, we 
	also enumerate rectangles where the number of symbols common to 
	two columns is always one of two possible values, differing by 1,
	which we call \emph{near Youden rectangles}.
	
	For all the designs we generate, we calculate the order of the 
	autotopism group and investigate to which degree a certain 
	transformation can yield other row-column 
	designs, namely double arrays, triple arrays and sesqui arrays.
	
	Finally, we also investigate certain Latin rectangles with three 
	possible pairwise intersection sizes for the columns and 
	demonstrate that these can give rise to triple and sesqui arrays 
	which cannot be obtained from Youden rectangles, using the 
	transformation mentioned above.
\end{abstract}

%----------------------------------------------------------------------
%----------------------------------------------------------------------
\section{Introduction}
%----------------------------------------------------------------------
%----------------------------------------------------------------------

An $(n,k,\lambda)$ Youden rectangle (sometimes referred to as a 
Youden square) where $n \geq k$ is a $k \times n$ array on $n$ symbols 
that satisfies the following two conditions: 
\begin{enumerate}
	\item There is no repeated symbol in any row or column, which 
		we will call the \emph{Latin condition}. 
	\item The number of shared symbols between any two columns is 
		always $\lambda$, which we will call the \emph{balance condition}. 
\end{enumerate}

Youden rectangles can be represented in different ways. In particular,
by switching the roles of rows and symbols, one gets a representation 
in the form of a square matrix, typically with some empty cells. In 
previous literature, the term `Youden square' has sometimes been used 
for the rectangular format as well, but we shall use the term 
`Youden rectangle' for the rectangular format, reserving the term 
`square' for the actual square format.

As indicated by the choice of terminology in the first part of the 
definition, a Youden rectangle can be viewed as a special case of a 
$k \times n$ Latin rectangle, which in this setting can be defined as 
a $k \times n$ array on $n$ symbols, satisfying the Latin condition. 
In the present paper, we exclude the square, and almost square cases 
$k=n$, $k=n-1$ as well as $k=1$ for Youden rectangles, since for these 
parameter choices, all Latin rectangles trivially also satisfy the 
second condition.

Clearly, each row will contain each symbol exactly once, and so
the array will also be \emph{equireplicate}, that is, each symbol
appears the same number of times, namely $k$. As is well known, 
divisibility and double counting considerations easily give that in 
order for a Youden rectangle to exist, $\lambda = \frac{k(k-1)}{n-1}$ 
must be an integer.

The reason for the use of the term `balance', is that when treating the 
columns of a Youden rectangle as sets of symbols, these sets form the 
blocks of a \emph{symmetric balanced incomplete block design} (SBIBD). 
Conversely, it was proven by Smith and Hartley~\cite{HS48} that the 
elements in the blocks of any SBIBD can be ordered to give a Youden 
rectangle. In fact, many different orderings are possible, so a single 
SBIBD will give rise to many different Youden rectangles. We have not
employed this connection between SBIBDs and Youden rectangles in our
computational work.

Alternatively, and equivalently, a Youden rectangle may be defined with
more of an SBIBD approach as a $k \times n$ array on $n$ symbols, where 
no symbol is repeated in any row, and when
viewing the columns as sets of symbols, each pair of symbols occurs
the same number of times, namely $\lambda$. The property that all
pairs of blocks in an SBIBD intersect in the same number of elements
is sometimes expressed by saying that the design is \emph{linked}, and 
more recently rather by saying that the design has 
\emph{constant block intersections}.

Already in the original paper \cite{You72} Youden points out that from 
a statistical point of view Youden rectangles suffer 
from the restricted set of feasible parameters. As one way around this 
problem we here introduce the class of \emph{near Youden rectangles}. 
For given values of $n$ and $k$ a near Youden rectangle is a Latin 
rectangle with two allowed block intersection sizes, differing by 1, 
rather than one single intersection size. This relaxation significantly 
increases the set of allowed parameters while in a sense still keeping 
the design as balanced as possible. In Section \ref{nyrintro} we discuss 
the theoretical properties of these designs in greater detail, and discuss 
their connections to existing design classes.

The early history of the study of Youden rectangles was chronicled by
Preece~\cite{Pr4}, and a good starting point for further reading is the 
Youden chapter in the Handbook of Combinatorial Designs~\cite{Handbook}.

Little has been done on complete enumeration of these objects, though 
in \cite{Preece66} Youden rectangles with $n\leq 7$ were classified by 
Preece, and in \cite{triples} we performed a full enumeration of mutually 
orthogonal (in the Latin rectangle sense) triples of Youden 
rectangles for $n\leq 7$. Note that orthogonal Youden 
rectangles should not be confused with \emph{multi-layered Youden 
rectangles}, as studied in \cite{Pr3}. In the present paper, our main 
aim has been to perform a 
complete enumeration of Youden rectangles for as large parameters as 
possible. The current state of knowledge 
on the number of Youden rectangles is tabulated in \cite{Handbook},
which goes up to $n=7$. 

The rest of the paper is structured as follows. 
In Section~\ref{sec_not} we give 
some further basic notation and formal definitions. In 
Section~\ref{sec_problem} we state the questions guiding our 
investigation, and describe briefly the method and algorithms used 
together with some practical information regarding the computer 
calculations. In Section~\ref{sec_results} we present the data 
our computer search resulted in, in particular the number of different 
Youden rectangles of some small orders. In Section~\ref{sec_TA}, we 
analyze the constructed objects with regards to other types of 
row-column designs. Section~\ref{sec_concl} concludes.

%----------------------------------------------------------------------
%----------------------------------------------------------------------
\section{Preliminaries}
\label{sec_not}
%----------------------------------------------------------------------
%----------------------------------------------------------------------%----------------------------------------------------------------------

%----------------------------------------------------------------------
\subsection{Notions of Equivalence}
%----------------------------------------------------------------------

We will use $\{0,1,\dots,n-1\}$ as the symbol set.
We call a Youden rectangle \textit{normalized} 
if it satisfies the following conditions:

\begin{description}
	\item[(S1)] (Ordering among columns) 
		The first row is the identity permutation.
	\item[(S2)] (Ordering among rows)
		The first column is $0,1,2,\ldots,k-1$.	
\end{description}

Two Youden rectangles $Y_A$ and $Y_B$ are said 
to be \emph{isotopic} if there exists a permutation $\pi_s$ of the 
symbols, a permutation $\pi_r$ of the rows and a permutation $\pi_c$ of 
the columns
such that when applying all three permutations to $Y_A$, we get $Y_B$.
The equivalence concept isotopism is perhaps the most natural one 
when studying Youden rectangles, and isotopism classes are also known 
as \emph{transformation sets} in this context.

Two normalized Youden rectangles $Y_A$ and $Y_B$ can be 
isotopic to each other, so grouping Youden rectangles according
to which normalized rectangle they yield when renaming the symbols
in the first column $0, 1, \ldots k-1$ in this order, and permuting
the columns to satisfy \textbf{S1} gives a weaker notion of equi\-valence, 
by saying that $Y_A$ and $Y_B$ are equivalent if they yield the same 
normalized Youden rectangle in this way.

Other concepts of 
equivalence are also possible, and allowing for exchanging the roles 
of symbols and columns leads to the notion of \emph{species} (also
known as \emph{main classes}). In the present paper, we will not be
employing the last mentioned notion of equivalence, and we comment on 
this choice below. Taking transposes (that is, exchanging 
the roles of columns and rows), or exchanging the roles of symbols and
rows, however, does not map $k \times n$ Youden rectangles to 
$k \times n$ Youden rectangles, and so we do not consider these 
transformations here.

Making this more formal, the group $G_{n,k}=S_k\times S_n \times S_n$ 
of isotopisms acts on the set of $k\times n$ Youden rectangles, where 
$S_k$ corresponds to a permutation of the rows, the first $S_n$ 
corresponds to a permutation 
of the columns, and the last $S_n$ corresponds to a permutation of the 
symbols. Two rectangles $Y_A$ and $Y_B$ of size $k\times n$ are 
isotopic, and we say that they belong to the same \emph{isotopism class} 
if there exists a $g\in G_{n,k}$ such that  $g(Y_A)=Y_B$. 
The \emph{autotopism group} of a Youden rectangle $Y$ is defined as 
$\mbox{Aut}(Y):=\{g\in G_{n,k}\, |\, g(Y)=Y\}$.
When presenting examples, we use normalized representatives of 
isotopism classes. For a recent survey on the concept of isotopism in 
algebra and designs, see~\cite{FFN}.

%----------------------------------------------------------------------
\subsection{Near Youden Rectangles}\label{nyrintro}
%----------------------------------------------------------------------

For parameters where $\lambda$ as calculated by 
$\lambda = \frac{k(k-1)}{n-1}$ is not an integer, no Youden rectangle
exists. This divisibility is quite restrictive and from a statistical 
design theory perspective it is desirable to include more parameter 
choices here. One natural relaxation is to allow two different column 
intersection sizes, leading us to the following definition:

\begin{definition}\label{def_PBY}
	A \emph{near Youden rectangle} (NYR) is a $k \times n$ 
	Latin rectangle where every column-column intersection has size 
	either $\lambda_1=\lfloor \lambda \rfloor$, or 
	$\lambda_2=\lceil \lambda \rceil$, 
	where $\lambda=\frac{k(k-1)}{n-1}$. 
\end{definition}

An example of a $4 \times 6$ NYR with column intersection sizes 
$\lambda_1 = 2$ and $\lambda_2 = 3$ is given in Figure~\ref{nYA6}. For 
example, the first column intersects the second, third and fourth columns 
in $2$ symbols, and the remaining columns in $3$ symbols.

\begin{figure}[h]
	\begin{center}
		\begin{tabular}{|cccccc|}
		\hline 			
		0 & 1 & 2 & 3 & 4 & 5 \\ 
		1 & 0 & 5 & 4 & 3 & 2 \\ 
		2 & 4 & 0 & 5 & 1 & 3 \\ 
		3 & 5 & 4 & 0 & 2 & 1 \\
		\hline 
		\end{tabular} 
	\end{center}
	\caption{A $4\times 6$ near Youden rectangle.}
	\label{nYA6}
\end{figure}

If $\lambda_1$ is zero, the resulting designs (when interpreting columns
as blocks) may be \emph{disconnected}, that is, the symbol set can be
partitioned in two parts $S_1$ and $S_2$ such that the set of columns where
the symbols in $S_1$ appear is disjoint from the set of columns where the 
symbols in $S_2$ appear. For example, two $7 \times 3$ Youden rectangles on
disjoint symbol sets may be juxtaposed to form a $14 \times 3$ near Youden 
rectangle. Disconnectedness is undesirable from a statistical design point,
but when $\lambda_1 \geq 1$, all near Youden rectangles are connected.

If we disregard the order of the elements in the columns of an NYR we get 
an equireplicate block design  with the same intersection property as 
the NYR, i.e. pairs of blocks intersect in either $\lambda_1$ or $\lambda_2$ 
elements. However, in the study of block designs it has been more 
common to define designs in terms of covering numbers for pairs of symbols, 
i.e. the number of blocks which contain the pair of symbols, rather 
than intersection numbers.  However, following Fisher's original proof of 
Fishers inequality in~\cite{MR2086}, rather than the now more common 
linear algebraic 
version, one can easily connect intersection numbers and covering numbers. 
The idea behind Fisher's proof is to calculate the variance of the 
intersection numbers in terms of the covering numbers, and as a corollary 
he also gets the result that in an SBIBD the intersection number is 
constant. This argument can also be done in the other direction, describing 
the variance of the covering numbers in terms of the intersection numbers.  
Instead of doing this from scratch we will use an identity given by Tsuji 
in \cite{Tsu}, though we note that similar identities were used earlier in 
\cite{Brown88}. We here state the identity in a less general form, adapted 
to our current situation. 

\begin{theorem}[Lemma 1 in \cite{Tsu}]\label{TsuThm}
	Let $l_{p,q}$ denote the number of columns which contain the pair 
	$\{p,q\}$ and $m_{i,j}$  the size of the intersection of the $i$:th 
	and $j$:th columns. With $\lambda$ as already defined, we then have
	\begin{equation*}
	\begin{aligned}
	\sum_{p,q} (l_{p,q} - \lambda)^2 = {} &
	\sum_{i,j}\left( m^2_{i,j} 
		- \left (1+2\frac{(k-1)^2}{n-2}\right)m_{i,j}\right.
	\\
	& + \left.\frac{k(k-1)}{n-1} 
		\left(1-2\frac{k-1}{n-2}+\frac{nk(k-1)}{(n-1)(n-2)}\right)\right)
	\end{aligned}
	\end{equation*}
	where the first sum is over 2-subsets of symbols and the second is 
	over 2-subsets of columns.  
\end{theorem}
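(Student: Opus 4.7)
My plan is to establish the identity by a double-counting argument that yields the clean reformulation $\sum_{\{p,q\}}(l_{p,q}-\lambda)^2 = \sum_{\{i,j\}}(m_{i,j}^2 - \lambda m_{i,j})$, and then recognize Tsuji's more elaborate right-hand side as an algebraic rewriting of this. Two basic first-moment identities come first. Summing over 2-subsets of symbols, $\sum_{\{p,q\}} l_{p,q}$ counts all pairs of distinct symbols appearing together in some column, which equals $n\binom{k}{2}$ since each column contributes $\binom{k}{2}$ such pairs. Dually, the equireplicate property (each symbol lies in exactly $k$ columns) gives $\sum_{\{i,j\}} m_{i,j} = n\binom{k}{2}$ after swapping the order of summation. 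Since $n\binom{k}{2} = \binom{n}{2}\lambda$, both sums equal $\binom{n}{2}\lambda$, so the $l$'s and $m$'s share the common mean $\lambda$.

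Next I would compute the second moment $\sum_{\{p,q\}} l_{p,q}^2$ by interpreting it as the count of ordered triples $(\{p,q\},i,j)$ with $\{p,q\} \subseteq C_i \cap C_j$. Swapping the summation order and separating the diagonal contribution from $i = j$ gives
\[
\sum_{\{p,q\}} l_{p,q}^2 \;=\; n\binom{k}{2} + 2\sum_{\{i,j\}} \binom{m_{i,j}}{2}.
\]
Using $2\binom{m}{2}=m^2-m$ together with the first-moment identity collapses the right side to $\sum m_{i,j}^2$, so $\sum l_{p,q}^2 = \sum m_{i,j}^2$. Expanding the theorem's left-hand side and applying both the first- and second-moment identities then yields
\[
\sum_{\{p,q\}}(l_{p,q}-\lambda)^2 \;=\; \sum l_{p,q}^2 - 2\lambda\sum l_{p,q} + \tbinom{n}{2}\lambda^2 \;=\; \sum m_{i,j}^2 - \lambda\sum m_{i,j} \;=\; \sum_{\{i,j\}}\bigl(m_{i,j}^2 - \lambda m_{i,j}\bigr),
\]
which is the clean form of the identity.

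To recover Tsuji's exact form, write his right-hand side as $\sum_{\{i,j\}}(m_{i,j}^2 - \alpha m_{i,j} + \beta)$ with the stated values of $\alpha$ and $\beta$. Its difference from the clean form is $(\lambda-\alpha)\sum m_{i,j} + \binom{n}{2}\beta$, which by $\sum m_{i,j} = \binom{n}{2}\lambda$ equals $\binom{n}{2}\bigl[(\lambda-\alpha)\lambda + \beta\bigr]$. It therefore suffices to verify the scalar identity $\beta = (\alpha-\lambda)\lambda$ for Tsuji's specific coefficients, which reduces to a routine manipulation of fractions using $\lambda = k(k-1)/(n-1)$. The main obstacle is really just the bookkeeping in this last algebraic check; all the combinatorial content of the theorem is already captured by the two double counts above.
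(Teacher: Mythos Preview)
The paper does not actually prove this statement; it is quoted as Lemma~1 from Tsuji's paper \cite{Tsu} and used as a black box, so there is no in-paper argument to compare against. Your double-counting proof is correct: the two first-moment identities $\sum_{\{p,q\}} l_{p,q} = \sum_{\{i,j\}} m_{i,j} = n\binom{k}{2} = \binom{n}{2}\lambda$ and the second-moment identity $\sum_{\{p,q\}} l_{p,q}^2 = \sum_{\{i,j\}} m_{i,j}^2$ together give the clean form $\sum(l_{p,q}-\lambda)^2 = \sum(m_{i,j}^2 - \lambda m_{i,j})$, and a direct check confirms that Tsuji's coefficients satisfy $\beta = (\alpha-\lambda)\lambda$, so his right-hand side differs from yours by $\binom{n}{2}\bigl[(\lambda-\alpha)\lambda+\beta\bigr]=0$. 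In effect you have supplied the proof that the paper omits.
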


Next we note that we can determine the number of column pairs with a given 
intersection size in a NYR, and that these intersection sizes are nicely 
distributed.

\begin{proposition}\label{prop_nYR_col_intersect}
	Let $A$ be a $k \times n$ near Youden rectangle with column 
	intersection sizes 
	$\lambda_1 = \left \lfloor \frac{k(k-1)}{n-1} \right \rfloor$ 
	and 
	$\lambda_2 = \left \lceil \frac{k(k-1)}{n-1} \right \rceil$. 
	Then any column $c$ intersects $n_1=\lambda_2(n-1)-k(k-1)$ other 
	columns in $\lambda_1$ symbols and 
	$n_2=- \lambda_1(n-1) + k(k-1)$ other  
	columns in $\lambda_2$ symbols.	
\end{proposition}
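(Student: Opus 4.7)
The plan is to run a very short double counting argument on a fixed column $c$, using only two facts: the total count of other columns, and the equireplicate property that every symbol appears in exactly $k$ columns (noted in the introduction for Youden rectangles, and equally valid here since a nYR is still a $k\times n$ Latin rectangle on $n$ symbols, so each symbol appears once per row).

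First I would fix an arbitrary column $c$ and let $a$ denote the number of other columns whose intersection with $c$ has size $\lambda_1$, and $b$ the number of other columns whose intersection with $c$ has size $\lambda_2$. Since there are $n-1$ other columns and by the definition of a near Youden rectangle every column intersection has size $\lambda_1$ or $\lambda_2$, we immediately get $a+b = n-1$.

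Next I would count incidences of the form (symbol $s$ in $c$, column $c'\neq c$ containing $s$) in two ways. Counting by symbols: each of the $k$ symbols in $c$ appears in exactly $k$ columns overall, hence in $k-1$ other columns, giving $k(k-1)$ incidences. Counting by columns: each other column $c'$ contributes $|c\cap c'|$ incidences, so the total is $a\lambda_1 + b\lambda_2$. Thus $a\lambda_1+b\lambda_2 = k(k-1)$.

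Finally I would solve the linear system. Using $\lambda_2 = \lambda_1 + 1$, the second equation becomes $(a+b)\lambda_1 + b = k(k-1)$, so $b = k(k-1) - \lambda_1(n-1) = n_2$, and then $a = (n-1) - b = \lambda_2(n-1) - k(k-1) = n_1$, as claimed. There is no real obstacle here; the only thing to watch is that the argument uses nothing about the two intersection sizes except that they differ by $1$, so the conclusion holds uniformly for every column $c$.
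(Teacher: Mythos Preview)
Your proof is correct and is essentially identical to the paper's own argument: both fix a column, double count the total intersection with the remaining columns (equivalently, the incidences between symbols of $c$ and other columns), obtain $a\lambda_1+b\lambda_2=k(k-1)$ together with $a+b=n-1$, and solve using $\lambda_2=\lambda_1+1$. The only differences are notational.
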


\begin{proof}
	We fix an arbitrary column $c$ and count the sum total $S$ of
	the sizes of the intersections between $c$ and all the other 
	columns. Suppose $c$ intersects $n_1$ columns in $\lambda_1$ 
	symbols and intersects $n_2$ columns in $\lambda_2$ symbols.
	Counting by columns, we then get $S = \lambda_1 n_1 + \lambda_2 n_2$.
	
	Counting by symbols present in column $c$, we get $S = k(k-1)$,
	since $c$ contains $k$ symbols and $A$ is equireplicate with 
	replication number $k$, that is, each of the $k$ symbols in $c$ 
	appears $k-1$ times outside of $c$.
	
	Equating the different counts, and using that $n_1+n_2=n-1$ and 
	$\lambda_1 + 1 = \lambda_2$, we get
	a linear equation $\lambda_1n_1 + \lambda_2(n-1-n_1) = k(k-1)$ 
	in the variable $n_1$, with solution $n_1 = \lambda_2(n-1)-k(k-1)$.
	It follows that $n_2  = - \lambda_1(n-1) + k(k-1)$, and since the 
	choice of $c$ was not used, these values are equal for all columns $c$.
\end{proof}

Theorem~\ref{TsuThm} together with Proposition~\ref{prop_nYR_col_intersect} 
gives the following:

\begin{theorem}\label{dual}
	If $D$ is the block design obtained from a $k \times n$ NYR, 
	then any pair of symbols is covered by either $\lambda_1$ or 
	$\lambda_2$ blocks in $D$.
\end{theorem}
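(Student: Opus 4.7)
The plan is to apply Tsuji's identity from Theorem \ref{TsuThm} to the block design $D$, evaluate the right-hand side explicitly using the column intersection structure supplied by Proposition \ref{prop_nYR_col_intersect}, and then combine the result with a simple algebraic identity together with integrality of the $l_{p,q}$ to force $l_{p,q} \in \{\lambda_1, \lambda_2\}$.

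First I would compute the right-hand side of Tsuji's identity for an nYR. By Proposition \ref{prop_nYR_col_intersect}, every column meets exactly $n_1$ other columns in $\lambda_1$ symbols and $n_2$ in $\lambda_2$ symbols, and using $k(k-1)=\lambda(n-1)$ these simplify to $n_1=(n-1)(\lambda_2-\lambda)$ and $n_2=(n-1)(\lambda-\lambda_1)$. Thus the number of column pairs of each type is $N_1=\binom{n}{2}(\lambda_2-\lambda)$ and $N_2=\binom{n}{2}(\lambda-\lambda_1)$. One then substitutes $m_{i,j}\in\{\lambda_1,\lambda_2\}$ into Tsuji's quadratic in $m_{i,j}$ and sums. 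A useful reduction is to verify by direct substitution that the quadratic vanishes at $m=\lambda$, after which it factors and the sum collapses. The expected value after simplification is
\[
\binom{n}{2}(\lambda-\lambda_1)(\lambda_2-\lambda).
\]

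On the left-hand side, I would use the identity
\[
(l-\lambda)^2 = (l-\lambda_1)(l-\lambda_2) + (\lambda_1+\lambda_2-2\lambda)\,l + \lambda^2-\lambda_1\lambda_2,
\]
and sum over the $\binom{n}{2}$ pairs of symbols. Since the total coverage $\sum_{p<q} l_{p,q}$ equals $n\binom{k}{2}=\binom{n}{2}\lambda$ by double counting through equireplication, the linear contribution collapses, and the constant part evaluates to $\binom{n}{2}(\lambda-\lambda_1)(\lambda_2-\lambda)$. Equating both sides of Tsuji's identity now yields $\sum_{p<q}(l_{p,q}-\lambda_1)(l_{p,q}-\lambda_2)=0$. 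Since each $l_{p,q}$ is a nonnegative integer and $\lambda_2-\lambda_1=1$, the only integers $l$ with $(l-\lambda_1)(l-\lambda_2)\leq 0$ are $\lambda_1$ and $\lambda_2$ themselves, and otherwise the product is strictly positive. Hence every summand is zero, forcing $l_{p,q}\in\{\lambda_1,\lambda_2\}$ for all symbol pairs.

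The main obstacle is the algebraic simplification of Tsuji's right-hand side: the quadratic carries $(n-1)(n-2)$ denominators, so care is needed when substituting $k(k-1)=\lambda(n-1)$ and collecting the constant term. An alternative that sidesteps this is to observe, via two short double-counting arguments using $r=k$, that $\sum_{i<j} m_{i,j}=\sum_{p<q}l_{p,q}$ and $\sum_{i<j}m_{i,j}^2=\sum_{p<q}l_{p,q}^2$; Tsuji's identity in our equireplicate setting then reduces to the cleaner form $\sum_{p<q}(l_{p,q}-\lambda)^2=\sum_{i<j}(m_{i,j}-\lambda)^2$, after which the first step of the plan is immediate from the intersection distribution given by Proposition \ref{prop_nYR_col_intersect}.
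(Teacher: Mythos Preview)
Your proposal is correct and follows essentially the same strategy as the paper: compute the right-hand side of Tsuji's identity from the intersection data of Proposition~\ref{prop_nYR_col_intersect}, observe that it equals the minimum possible value of the left-hand side subject to the integrality constraint on the $l_{p,q}$, and conclude that this minimum is attained only when every $l_{p,q}\in\{\lambda_1,\lambda_2\}$. The paper phrases the last step as a convexity/variance-minimisation argument, whereas you make it fully explicit via the factorisation $(l-\lambda)^2=(l-\lambda_1)(l-\lambda_2)+\text{(affine in }l\text{)}$ and the nonnegativity of $(l-\lambda_1)(l-\lambda_2)$ for integers $l$; these are two formulations of the same idea.

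Your closing remark is a genuine, and welcome, simplification over the paper. Noting that in this equireplicate symmetric situation one has $\sum_{i<j}m_{i,j}=\sum_{p<q}l_{p,q}$ and $\sum_{i<j}m_{i,j}^2=\sum_{p<q}l_{p,q}^2$ (both follow immediately from $\mathrm{tr}(MM^T)=\mathrm{tr}(M^TM)$ and $\mathrm{tr}((MM^T)^2)=\mathrm{tr}((M^TM)^2)$ for the incidence matrix $M$, using that row and column sums are all $k$) collapses Tsuji's identity to the clean form $\sum_{p<q}(l_{p,q}-\lambda)^2=\sum_{i<j}(m_{i,j}-\lambda)^2$. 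This bypasses the awkward $(n-1)(n-2)$ denominators entirely and makes the evaluation of the right-hand side a one-line computation from Proposition~\ref{prop_nYR_col_intersect}. The paper instead invokes Tsuji's general lemma and asserts that the two computed values agree without displaying the algebra; your route is both more transparent and more self-contained.
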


\begin{proof}
	Let us first note that the left hand side of the identity in Theorem 
	\ref{TsuThm} is a multiple of the variance of the covering numbers. 
	The average covering number is $\lambda$, which is not an integer. 
	Hence the smallest possible variance would be achieved if all covering 
	numbers are one of $\lambda_1$ and $\lambda_2$. Since the variance is 
	a convex function this minimum is also unique.
	
	Using the values of $n_1$ and $n_2$ from 
	Proposition~\ref{prop_nYR_col_intersect} we can compute the right hand 
	side of the identity in Theorem \ref{TsuThm}. Using $\lambda_i$ and 
	$n_i$ for the covering numbers and their multiplicity in the left hand 
	side produces the same value.
	
	Hence the unique way to achieve the identity in Theorem \ref{TsuThm} 
	is to have all covering numbers equal to one of $\lambda_1$ and 
	$\lambda_2$, with the stated frequencies for those two numbers.
\end{proof}

Thus the block design coming from a NYR has both intersection numbers and 
covering numbers taking only two possible values, doing so in the way that 
minimises the variance of those numbers. The block designs appearing here 
are in fact members of a known class, introduced by John and Mitchell in 
1977 \cite{MR501627} called \emph{regular graph designs}. The name comes 
from a property of their concurrence matrices which is in fact the dual 
of our Proposition \ref{prop_nYR_col_intersect}. 
The class of regular graph designs, which includes non-symmetric designs, 
was later generalised to cases where equal replication is not possible, 
by Cheng and Wu in~\cite{MR626412}. As far as we know, Theorem \ref{dual} 
has however not been noticed in the literature on regular graph designs. 
Analogous to how Smith 
and Hartley~\cite{HS48} connect SBIBDs to Youden rectangles  we can obtain 
near Youden rectangles from regular graph designs by ordering the blocks 
and their elements. In fact, as observed e.g. by Bailey in Chapter 11.10 
of~\cite{Bailey08}, it is possible to order the elements of the blocks to 
become the columns of a row-column design for any equireplicate 
incomplete-block design with the same number of symbols as blocks.

Here we may also note that other types of designs where one allows the 
covering numbers, or intersection numbers to be non-constant have been 
studied. Bose and Nair~\cite{BoseNair} introduced and studied 
\emph{partially balanced incomplete block designs} (PBIBD). The 
particular case where there are just two different values for the number 
of repetitions of pairs in a PBIBD was studied for example by Bose and
Shimamoto in~\cite{BoseShimamoto}, and \emph{symmetric} PBIBDs have also been 
studied, e.g., by Lawless and Stanton in~\cite{SPBIBD}. Looking instead at sizes of intersections 
between blocks, the subclass of \emph{balanced incomplete block designs} 
(BIBDs) where the block intersections only have two different sizes has 
been studied under the name \emph{quasi-symmetric} designs, for example 
by Shrikande and Sane in~\cite{quasi}. Duals (exchanging the role of blocks and symbols) of 
PBIBDs have been studied under the name of \emph{linked block} designs (LB), 
with different relaxations, see, e.g., \cite{duals, Nair, RoyLaha}.

%----------------------------------------------------------------------
%----------------------------------------------------------------------
\section{Generating Data}
\label{sec_problem}
%----------------------------------------------------------------------
%----------------------------------------------------------------------

In this section, we describe our computational work in general terms.

%----------------------------------------------------------------------
\subsection{Guiding Questions}
%----------------------------------------------------------------------

Our approach is complete enumeration by computer for as large parameter 
values as possible, and unless otherwise stated, we save all generated data. 
In particular, we not only record the \emph{number} of Youden rectangles 
found, but we save the objects themselves.

With some exceptions due to size restrictions, the data generated 
is available for download at~\cite{Web3} and~\cite{Web4}.
Further details about the organization of the data are given there.

The following questions serve as guides for what data to generate.

\begin{description}
\item[(Q1)] How many isotopism classes of $k\times n$ Youden 
	rectangles are there?
\item[(Q2)] What is the order of the autotopism group of each 
	$k\times n$ Youden rectangle?
\item[(Q3)] If some condition is relaxed, how many objects satisfying the
	relaxed conditions are there?
\end{description}

%----------------------------------------------------------------------
\subsection{Feasible parameter combinations}
%----------------------------------------------------------------------

A necessary (but not sufficient) condition for the existence of a 
Youden rectangle is that $\lambda=\frac{k(k-1)}{n-1}$ is an integer. 
We exclude $k=1$, $k=n-1$ and $k=n$, as being trivial, since all Latin 
rectangles for those values are Youden rectangles. We call non-trivial 
parameter values satisfying the divisibility condition \emph{feasible}.
The smallest feasible parameter combinations for nontrivial Youden
rectangles are given in Table~\ref{table_param}. Note that if $(k,n)$
are feasible parameters for a Youden rectangle, then so are $(n-k,n)$.

\begin{table}[h]
\begin{center}
\begin{tabular}{|l|c|c|c|c|c|c|c|c|c|c|c|c|c|c|c|c|}
	\hline
	$n \backslash k$ 
		 &  3 & 4 & 5 & 6 & 7 & 8 & 9 & 10 & 11 & 12 & 16 \\ 
	\hline
	$7$  &  E & E &   &   &   &   &   &    &    &    &    \\ 
	\hline
	$11$ &    &   & E & E &   &   &   &    &    &    &    \\ 
	\hline
	$13$ &    & E &   &   &   &   & X &    &    &    &    \\ 
	\hline
	$15$ &    &   &   &   & X & X &   &    &    &    &    \\ 
	\hline
	$16$ &    &   &   & E? &   &   &   & X  &    &    &    \\ 
	\hline
	$19$ &    &   &   &   &   &   & X & X  &    &    &    \\ 
	\hline
	$21$ &    &   & E &   &   &   &   &    &    &    & X  \\ 
	\hline
	$23$ &    &   &   &   &   &   &   &    & X  & X  &    \\ 
	\hline 
\end{tabular} 
\end{center}
\caption{All feasible parameter combinations for Youden rectangles with $7 \leq n \leq 23$.
	An E indicates full enumeration in the present paper, and an
	X indicates feasible parameters but no complete enumeration.}
\label{table_param}
\end{table}

We have attempted to generate the Youden rectangles for all parameter 
combinations in Table~\ref{table_param}, but in the remaining cases 
the number of partial objects was too large and the computation had 
to be stopped due to lack of storage space. The fact that we could 
handle one case for $n=21$ illustrates the fact that growing $n$ is 
not the only challenge for complete enumeration, but rather an interplay 
between $n$ and $k$.

For parameter sets where there do not exist Youden rectangles, we have 
enumerated $k \times n$ near Youden rectangles, where the 
intersection sizes between symbol sets in columns (as noted above) are 
either $\lambda_1 = \left\lfloor \frac{k(k-1)}{n-1} \right\rfloor$ or 
$\lambda_2 = \left\lceil \frac{k(k-1)}{n-1} \right\rceil$. 

For near Youden rectangles there are no simple divisibility conditions 
which have to be satisfied, like the ones for SBIBDs, and as we shall 
see we find numerous examples for all small parameters. However, a theorem 
of Brown~\cite{Brown88} implies that for $n=17, k=6$, near Youden 
rectangles do not exist. So, while near Youden rectangles are much less 
restricted than Youden rectangles, the existence question is still non-trivial.

%----------------------------------------------------------------------
\subsection{Implementation and Execution}
%----------------------------------------------------------------------

We generated all non-isotopic rectangles by consecutively adding all possible columns, 
while observing that none of the conditions were violated. At suitable points,
we reduced our list of partial objects by isotopism. Also, at selected stages, 
the list of partial objects was culled by running checks on whether they
were at all extendible to a full Youden rectangle. We note here that using
the definition in terms of constant sized column intersections, rather than
the definition in terms of each symbol pair appearing a constant number of
times, makes it possible to reduce the list of partial objects much more
effectively. We also observe that for partial objects, it is not possible 
(at least not straightforwardly) to reduce the list of partial objects with 
respect to species (main classes). At this stage, therefore, it is natural
to employ the equivalence notion of isotopism.

The algorithms used were implemented in C++ and run in a parallelized version 
on the Kebnekaise supercomputer at High Performance Computing Centre 
North (HPC2N). 

The algorithm is divided into two parts. The first extends a given partial 
rectangle with $k$ rows and $t$ columns by one column, such that the new
rectangle satisfies both the Latin condition and the balance condition.
More specifically, we first add a column with $k$ different symbols. We then 
check that in the extended $k \times (t+1)$ rectangle, no symbols appear 
more than once in any row. We also check that the number of shared symbols 
between the added column and the $t$ first columns is $\lambda$. In the
case of generating near Youden rectangles, we instead
check that all intersection sizes with the new column fall into
one of the two allowed values.
By checking all possible added columns, we find all extensions of the 
given $k \times t$ rectangle.

The second part of the algorithm checks whether a received 
$k \times (t+1)$ rectangle could be chosen as a normalized 
representative of an isotopism class. 

When a full Youden rectangle has been received, we check the order of the 
autotopism group. The group of possible autotopism actions on a $k \times n$ 
Youden rectangle is $S_k\times S_n \times S_n$, so potentially, the number 
of actions we need to check is $k!\cdot n!\cdot n!$.

Since we consider normalized rectangles this number can be reduced to 
$n \cdot k!\cdot (n-k)!$, since once we have chosen the first column 
($n$ options) and row permutation $\pi_r$ ($k!$ options) we fix $k$ symbols 
in the symbol permutation $\pi_s$ (so $(n-k)!$ options remain).

The running time grows quickly as the rectangle parameters grow. 
We completely enumerated Youden rectangles of sizes $3\times 7$, $4\times 7$, 
$5\times 11$ and $4\times 13$ in a few minutes on a standard desktop computer.
On the other hand, computation on sizes $6\times 11$ and $5\times 21$ required 
high performance computers and significantly more time. Using 
a parallelized version of the algorithms, enumerating $6\times 11$ Youden 
rectangles took about 6000 core hours, which is a bit less than 1 year. The 
$5 \times 21$ case required several hundred core years. 

Our methods and code can be applied to larger parameter values as 
well,  but the number of partial rectangles, which is far larger than those for complete 
rectangles, become unmanageable.   The running time per  partial object has not been 
the bottleneck for our program, so there has been no reason to employ more sophisticated 
generation methods or equivalence checks. Instead the number of partial objects for large 
parameters became so large that disc space became the limiting factor.

%----------------------------------------------------------------------
%----------------------------------------------------------------------
\section{Basic Computational Results}
\label{sec_results}
%----------------------------------------------------------------------
%----------------------------------------------------------------------

We now turn to the results and analysis of our computational work. 

%----------------------------------------------------------------------
\subsection{The Number of Youden Rectangles}
\label{ssec_numberY}
%----------------------------------------------------------------------

Our first result is an enumeration of Youden rectangles. In 
Tables~\ref{ytab1} to~\ref{ytab4},
we present data on the number of non-isotopic Youden rectangles, 
sorted by the order of the autotopism groups.

It is relevant to compare these numbers with the number of Latin 
rectangles. When no reduction at all is applied, there are
\num{782 137 036 800} Latin rectangles of size $4 \times 7$,
and only $512$ Youden rectangles of the same size (note that this 
number is not given in any of the tables in the present paper). In 
\cite{MW}, the numbers of \emph{reduced} $n \times k$ Latin rectangles 
are given for $k \leq n$, $1 \leq n \leq 11$, that is, the number of 
Latin rectangles whose first row is the identity permutation and the 
first column is $0, 1, \ldots, k-1$, and there are 
\num{1 293 216} reduced Latin rectangles of size $4 \times 7$. 
Finally, there are \num{1398} $4 \times 7$ non-isotopic 
Latin rectangles~\cite{McKay98}, to be compared with
only 6 non-isotopic Youden rectangles of the same size.
As we can see, the proportion of Latin rectangles 
that additionally satisfy the balance condition is small.

We note again that the $3 \times 7$ and $4 \times 7$ Youden rectangles
were completely classified by Preece~\cite{Preece66}, and that
our enumerative results are in accordance with his classification.

\begin{table}

\begin{center}
\begin{tabular}{|c|r||c|c|}
	\hline
	\multicolumn{2}{|c||}{$(n,k,\lambda)$}	& (7,3,1) & (7,4,2) \\ \hline 
	\multicolumn{2}{|c||}{\#YR}	& 1 & 6 \\ \hline\hline
	& 1 & 0 & 2 \\ \cline{2-4}
	$|\mbox{Aut}|$ & 3 & 0 & 3 \\ \cline{2-4}
	& 21 & 1 & 1 \\ 
	\hline 
\end{tabular} 
\end{center}
\caption{The number of Youden rectangles with \mbox{$n=7$} sorted
	by autotopism group order.}
\label{ytab1}
\end{table}

\begin{table}

\begin{center}
\begin{tabular}{|c|r||r|r|}
	\hline
	\multicolumn{2}{|c||}{$(n,k,\lambda)$}	& (11,5,2) & (11,6,3) \\ \hline 
	\multicolumn{2}{|c||}{\#YR}	& 79 416 & 995 467 440 \\ \hline\hline
	& 1 & 77 694 & 995 421 832 \\ \cline{2-4}
	& 2 & 1 423 & 40 831 \\ \cline{2-4}
	& 3 & 199 & 4 454 \\ \cline{2-4}
	& 4 & 45 & 124 \\ \cline{2-4}
	$|\mbox{Aut}|$ & 5 & 4 & 121 \\ \cline{2-4}
	& 6 & 38 & 62 \\ \cline{2-4}
	& 10 & 3 & 3 \\ \cline{2-4}
	& 12 & 7 & 10 \\ \cline{2-4}
	& 55 & 1 & 1 \\ \cline{2-4}
	& 60 & 2 & 2 \\ 
	\hline 
\end{tabular} 
\end{center}
\caption{The number of Youden rectangles with \mbox{$n=11$} sorted
	by autotopism group order.}
\label{ytab2}
\end{table}

\begin{table}

\begin{center}
\begin{tabular}{|c|r||c|}
	\hline
	\multicolumn{2}{|c||}{$(n,k,\lambda)$}	& (13,4,1) \\ \hline 
	\multicolumn{2}{|c||}{\#YR}	& 20 \\ \hline\hline
	& 1 & 12 \\ \cline{2-3}
	$|\mbox{Aut}|$ & 3 & \phantom{0}7 \\ \cline{2-3}
	& 39 & \phantom{0}1  \\ 
	\hline 
\end{tabular} 
\end{center}
\caption{The number of Youden rectangles with \mbox{$n=13$, $k=4$} sorted
	by autotopism group order.}
\label{ytab3}
\end{table}

\begin{table}

\begin{center}
\begin{tabular}{|c|r||r|}
	\hline
	\multicolumn{2}{|c||}{$(n,k,\lambda)$}	& (21,5,1) \\ \hline 
	\multicolumn{2}{|c||}{\#YR}	& 3 454 435 044 \\ \hline\hline
	& 1 & 3 454 384 100 \\ \cline{2-3}
	& 2 & 37 394 \\ \cline{2-3}
	& 3 & 13 349 \\ \cline{2-3}
	& 5 & 14 \\ \cline{2-3}
	& 6 & 109 \\ \cline{2-3}
	$|\mbox{Aut}|$ & 7 & 4 \\ \cline{2-3}
	& 9 & 55 \\ \cline{2-3}
	& 14 & 6 \\ \cline{2-3}
	& 18 & 7 \\ \cline{2-3}
	& 21 & 1 \\ \cline{2-3}
	& 42 & 3 \\ \cline{2-3}
	& 63 & 1 \\ \cline{2-3}
	& 126 & 1 \\ 
	\hline 
\end{tabular} 
\end{center}
\caption{The number of Youden rectangles with \mbox{$n=21$, $k=5$} sorted
	by autotopism group order.}
\label{ytab4}
\end{table}

The most common autotopism group order for Latin rectangles is 1 (see
\cite{MW}). From the tables, we see that clearly the most common autotopism 
group order for Youden rectangles is also $1$, but that there are also rare 
examples of rather symmetric Youden rectangles. One such example, a Youden 
rectangle of size $4\times 13$, 
whose autotopism group order is $39$ is presented in 
Figure~\ref{YA13}. The autotopism group acts transitively on the columns 
of this Youden rectangle, that is, for any pair $C_1$ and $C_2$ of columns,
there is an autotopism that takes $C_1$ to $C_2$. 

As is well known, taking a $(n,k,\lambda)$ difference set as first
column and producing the remaining columns by developing this first 
column, that is, consecutively adding $1$ to each entry, will produce 
a Youden rectangle. The autotopism group of the resulting Youden 
rectangle will then act transitively on the set of columns.
We conclude that for $n=7, 11, 13$, the very symmetric Youden rectangles 
we found, where the order of the autotopism group is divisible by the 
number of columns, correspond to those Youden rectangles generated from 
difference sets. The situation for $n=21$ seems to be a bit more 
involved, since we see autotopism groups of orders $21$, $42$ (in fact, 
three such), $63$ and even $126$. A complete analysis of these 
Youden rectangles is beyond the scope of this paper, and we leave this 
as an open question.

\begin{figure}
	\begin{center}
		\begin{tabular}{|rrrrrrrrrrrrr|}
		\hline 
		0 & 1 & 2 & 3 & 4 & 5 & 6 & 7 & 8 & 9 & 10 & 11 & 12  \\ 
		%\hline 
		1 & 4 & 5 & 6 & 2 & 9 & 10 & 11 & 0 & 7 & 8 & 12 & 3  \\ 
		%\hline 
		2 & 5 & 7 & 8 & 9 & 11 & 0 & 3 & 4 & 12 & 1 & 6 & 10  \\ 
		%\hline 
		3 & 6 & 8 & 9 & 10 & 0 & 7 & 4 & 12 & 1 & 11 & 2 & 5  \\ 
		\hline 
	\end{tabular} 
	\end{center}
\caption{The $4\times 13$ Youden rectangle $Y$ with $|\mbox{Aut}(Y)|= 39$.}
\label{YA13}
\end{figure} 

For larger parameters, that is, where there exist more than one 
corresponding SBIBD, it would also have been interesting to group 
Youden rectangles according to which SBIBD they give if the ordering
in the columns is ignored.

%----------------------------------------------------------------------
\subsection{Near Youden Rectangles}
\label{ssec_twolambda}
%----------------------------------------------------------------------

In Tables~\ref{ntab1} to~\ref{ntab5}, we list complete data for the 
number of isotopism classes 
of near Youden rectangles (NYR) from $n=5$ to $n=9$ for sets of 
parameters where there are no Youden rectangles, sorted by the order of the 
autotopism groups.  We also display the number of NYRs which are 
self-conjugate as Latin rectangles, i.e., if we interchange the roles of 
columns and symbols we get a NYR in the same isotopism class. 

We have excluded the cases $k=1$, $k=n-1$ and $k=n$, since 
as observed above, for these cases all Latin rectangles are Youden 
rectangles as well. We also excluded the case $k=7, n=9$, for which 
the number of partial rectangles was deemed too large for a 
straight-forward run of our program.

In Tables~\ref{ntab6} to~\ref{ntab9}, we list data for the number of 
isotopism classes of near Youden rectangles from $n=10$ to $n=13$ for 
as large $k$ as was feasible, with the same restrictions on parameter 
values as for $n=5,\ldots, 9$.

\begin{observation}\label{obs_PBYexist}
	There exist NYRs for all parameters with $n\leq 10$.
\end{observation}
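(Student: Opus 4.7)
The plan is to reduce the observation to a finite case-check over the list of parameter pairs $(n,k)$ with $n\le 10$ and $2\le k\le n-2$, since the cases $k\in\{1,n-1,n\}$ have been excluded as trivial and the range $n\in\{2,3\}$ contains no nontrivial pair at all. The resulting list is short enough to be inspected pair by pair.

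First I would split the list according to whether $\lambda = k(k-1)/(n-1)$ is an integer. When $\lambda\in\mathbb{Z}$ we have $\lambda_1=\lambda_2=\lambda$, so Definition~\ref{def_PBY} coincides with the definition of a Youden rectangle and any Youden rectangle serves as a nYR. A quick scan of $k(k-1)/(n-1)$ for $4\le n\le 10$ and $2\le k\le n-2$ shows that this only happens at $(n,k)=(7,3)$ and $(7,4)$; existence in both cases is classical (the Fano plane and its complementary biplane combined with Smith--Hartley's construction~\cite{HS48}) and is also recorded explicitly in Table~\ref{ytab1}.

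For the non-integer cases with $5\le n\le 10$ the statement is witnessed by the strictly positive totals in Tables~\ref{ntab1}--\ref{ntab6}: every such $(n,k)$ appears there with at least one isotopism class. The only parameter not covered by either the integer-$\lambda$ argument or by those tables is $(n,k)=(4,2)$, which I would dispatch with an explicit one-line construction. For example the rows $(0,1,2,3)$ and $(1,2,3,0)$ form a $2\times 4$ Latin rectangle whose four column pairs intersect in sizes $\{1,0,1,1,0,1\}$, matching $\lambda_1=0$ and $\lambda_2=1$ with the multiplicities $n_1=1$ and $n_2=2$ predicted by Proposition~\ref{prop_nYR_col_intersect}.

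The statement involves no theoretical difficulty---the only real obstacle is bookkeeping. One must confirm that the two integer-$\lambda$ Youden cases, the non-integer cases enumerated in Section~\ref{sec_results}, and the single hand construction for $(4,2)$ together exhaust the finite parameter list, which is a routine check once $\lambda$ is tabulated for the at most twenty-odd relevant pairs.
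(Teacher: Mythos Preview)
Your bookkeeping misses several cases, and the gap is not cosmetic. You assert that ``every such $(n,k)$ appears in Tables~\ref{ntab1}--\ref{ntab6} with at least one isotopism class,'' but this is false: the paper explicitly says it \emph{excluded} $(n,k)=(9,7)$ from the enumeration, and Table~\ref{ntab6} for $n=10$ stops at $k=5$, so $(10,6)$, $(10,7)$, $(10,8)$ are absent as well. None of these four pairs has integer $\lambda$, so your Youden-rectangle clause does not catch them either.

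The paper closes this gap with a short complementation argument that you do not use: if a $k\times n$ nYR is completed to an $n\times n$ Latin square, the remaining $n-k$ rows form an $(n-k)\times n$ nYR. (Two columns whose top-$k$ symbol sets intersect in $m$ elements have complementary sets intersecting in $n-2k+m$; since $\lambda'=(n-k)(n-k-1)/(n-1)=\lambda+(n-2k)$, the two allowed intersection sizes shift by exactly $n-2k$.) With this in hand, the enumerated cases $(9,2)$ and $(10,2),(10,3),(10,4)$ yield existence for $(9,7)$ and $(10,8),(10,7),(10,6)$ respectively. Your explicit handling of $(4,2)$ is fine---indeed the paper's tables start only at $n=5$, so your direct construction there is a small bonus---but without the complementation step the argument for $n\le 10$ is incomplete.
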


This follows from our enumeration together with the observation that 
if a $k \times n$ NYR is completed to an $n \times n$ Latin square then 
the new $n-k$ rows also form an $(n-k) \times n$ NYR.

We note that for $k=2$ any NYR may be interpreted as a 2-regular 
graph. Such graphs can be easily enumerated by hand, and our data for this
case is verified by such a manual count.

\begin{table}
	\begin{center}
		\begin{tabular}{|c|r||c|c|}
			\hline
			\multicolumn{2}{|c||}{$(n,k,\lambda_1,\lambda_2)$}	& (5,2,0,1) & (5,3,1,2) \\ 
			\hline 
			\multicolumn{2}{|c||}{\# NYR} &  1	& 2 \\ 
			\multicolumn{2}{|c||}{\#self-conjugate} &  1	& 2 \\ 
			\hline\hline
			$|\mbox{Aut}|$ & 2 & 0 & 1 \\ \cline{2-4}
			& 10 & 1 & 1 \\ 
			\hline 
		\end{tabular} 
	\end{center}
	\caption{The number of near Youden rectangles with 
		\mbox{$n=5$} sorted	by autotopism group order.}
	\label{ntab1}
\end{table}

\begin{table}
	\begin{center}
		\begin{tabular}{|c|r||c|c|c|}
			\hline
			\multicolumn{2}{|c||}{$(n,k,\lambda_1,\lambda_2)$} & (6,2,0,1) 	& (6,3,1,2) & (6,4,2,3)\\ 
			\hline 
			\multicolumn{2}{|c||}{\#NYR} & 2 & 2 & 34 \\
			\multicolumn{2}{|c||}{\#self-conjugate} & 2 & 2 & 29 \\			 
			\hline\hline
			& 1 & 0&0 & \phantom{0}9 \\ \cline{2-5}
			& 2 & 0&0 & 11 \\ \cline{2-5}
			& 4 & 0&0 & \phantom{0}5 \\ \cline{2-5}
			$|\mbox{Aut}|$ & 6 & 0& 2 & \phantom{0}3 \\ \cline{2-5}
			& 12 & 1 & 0 & \phantom{0}4 \\ \cline{2-5}
			& 18 & 0 & 0 & \phantom{0}1 \\ \cline{2-5}
			& 36 & 1&0 & \phantom{0}1 \\ 
			\hline 
		\end{tabular} 
	\end{center}
	\caption{The number of near Youden rectangles with 
		\mbox{$n=6$} sorted	by autotopism group order.}
	\label{ntab2}
\end{table}

\begin{table}
	\begin{center}
		\begin{tabular}{|c|r||r|r|}
			\hline
			\multicolumn{2}{|c||}{$(n,k,\lambda_1,\lambda_2)$} & (7,2,0,1)	& (7,5,3,4) \\ 
			\hline 
			\multicolumn{2}{|c||}{\# NYR} &  2	& 5 205 \\ 
			\multicolumn{2}{|c||}{\# self-conjugate} &  2	& 2 778 \\ 			
			\hline\hline
			& \phantom{0}1 & 0 & 4 889 \\ \cline{2-4}
			$|\mbox{Aut}|$ 
			& \phantom{0}2 & 0 & 307 \\ \cline{2-4}
			& \phantom{0}4 & 0 & 8 \\ \cline{2-4} 
			& 14 & 1 & 1 \\ \cline{2-4} 
			& 24 & 1 & 0 \\ 
			\hline 
		\end{tabular} 
	\end{center}
	\caption{The number of near Youden rectangles with 
		\mbox{$n=7$} sorted	by autotopism group order.}
	\label{ntab3}
\end{table}

\begin{table}
	\begin{center}
		\begin{tabular}{|c|r||r|r|r|r|r|r|}
			\hline
			\multicolumn{2}{|c||}{$(n,k,\lambda_1,\lambda_2)$} & (8,2,0,1)	& (8,3,0,1) & 
			(8,4,1,2) & (8,5,2,3) & (8,6,4,5) \\ 
			\hline 
			\multicolumn{2}{|c||}{\# NYR} & 3 & 4 & 285 & 6 688 & 21 956 009 \\ 
			\multicolumn{2}{|c||}{\# self-conjugate} & 3 & 3 & 212 & 3 608 & 11 000 012 \\ 			
			\hline\hline
			& \phantom{0}1 & 0 & 0 & 173 & 6 204 & 21 905 896 \\ \cline{2-7}
			& \phantom{0}2 & 0 & 0 & 78 & 381 & 48 865 \\ \cline{2-7}
			& \phantom{0}3 & 0 & 0 & 0 & 37 & 0 \\ \cline{2-7}
			& \phantom{0}4 & 0 & 0 & 15 & 29 & 1 208 \\ \cline{2-7}
			& \phantom{0}5 & 0 & 0 & 0 & 0 & 24 \\ \cline{2-7}
			& \phantom{0}6 & 0 & 2 & 0 & 18 & 0 \\ \cline{2-7}
			$|\mbox{Aut}|$ 
			& \phantom{0}8 & 0 & 0 & 11 & 6 & 144 \\ \cline{2-7} 
			& 10 & 0 & 0 & 0 & 0 & 6 \\ \cline{2-7} 
			& 12 & 0 & 0 & 0 & 5 & 0 \\ \cline{2-7} 
			& 16 & 1 & 1 & 4 & 5 & 36 \\ \cline{2-7} 
			& 24 & 0 & 0 & 0 & 2 & 0 \\ \cline{2-7} 
			& 30 & 1 & 0 & 0 & 0 & 0 \\ \cline{2-7} 	
			& 32 & 0 & 0 & 4 & 0 & 6 \\ \cline{2-7} 
			& 48 & 0 & 1 & 0 & 1 & 0 \\ \cline{2-7} 
			& 64 & 1 & 0 & 0 & 0 & 4 \\
			\hline 
		\end{tabular} 
	\end{center}
	\caption{The number of near Youden rectangles with 
		\mbox{$n=8$} sorted	by autotopism group order.}
	\label{ntab4}
\end{table}

\begin{table}
	\begin{center}
		\begin{tabular}{|c|r||r|r|r|r|r|r|}
			\hline
			\multicolumn{2}{|c||}{$(n,k,\lambda_1,\lambda_2)$}& (9,2,0,1)	& (9,3,0,1) & 
			(9,4,1,2) & (9,5,2,3) & (9,6,3,4) \\ \hline 
			\multicolumn{2}{|c||}{\# NYR} & 4	& 11 & 5 342 & 2 757 904 & 731 801 066 \\ 
			\multicolumn{2}{|c||}{\# self-conjugate} & 4	& 11 & 2 955 & 1 388 084 & 98 054 401 \\ 			
			\hline\hline
			& \phantom{0}1 & 0 & 3 & 4 881 & 2 750 174 & 731 727 683 \\ \cline{2-7}
			& \phantom{0}2 & 0 & 1 & 355 & 7 148 & 69 733 \\ \cline{2-7}
			& \phantom{0}3 & 0 & 1 & 20 & 290 & 3 079 \\ \cline{2-7}
			& \phantom{0}4 & 0 & 0 & 54 & 177 & 312 \\ \cline{2-7}
			& \phantom{0}6 & 0 & 4 & 15 & 86 & 213 \\ \cline{2-7}
			& \phantom{0}8 & 0 & 0 & 3 & 7 & 0 \\ \cline{2-7}
			$|\mbox{Aut}|$ 
			& \phantom{0}9 & 0 & 1 & 3 & 6 & 16 \\ \cline{2-7} 
			& 12 & 0 & 0 & 8 & 6 & 18 \\ \cline{2-7} 
			& 18 & 1 & 0 & 2 & 8 & 5 \\ \cline{2-7} 
			& 36 & 1 & 0 & 0 & 1 & 4 \\ \cline{2-7} 
			& 40 & 1 & 0 & 0 & 0 & 0 \\ \cline{2-7} 			
			& 54 & 0 & 1 & 0 & 0 & 1 \\ \cline{2-7} 
			& 72 & 0 & 0 & 1 & 1 & 0 \\ \cline{2-7} 
			& 108 & 0& 0 & 0 & 0 & 2 \\\cline{2-7} 
			& 324 & 1 & 0 & 0 & 0 & 0 \\
			\hline 
		\end{tabular} 
	\end{center}
	\caption{The number of near Youden rectangles with 
		\mbox{$n=9$} sorted	by autotopism group order.}
	\label{ntab5}
\end{table}

\begin{table}
	\begin{center}
		\begin{tabular}{|c|r||r|r|r|r|r|}
			\hline
			\multicolumn{2}{|c||}{$(n,k,\lambda_1,\lambda_2)$} & (10,2,0,1)	& (10,3,0,1) & 	(10,4,1,2) & (10,5,2,3)  \\ \hline 
			\multicolumn{2}{|c||}{\# NYR} & 5	& 80 & 9 722 & 1 913 816 \\ 
			\multicolumn{2}{|c||}{\# self-conjugate} & 5	& 59 & 5 388 & 962 300 \\ 			
			\hline\hline
			& \phantom{0}1 & 0 & 48 & 9 288 & 1 907 844  \\ \cline{2-6}
			& \phantom{0}2 & 0 & 23 & 331 & 5 952  \\ \cline{2-6}
			& \phantom{0}3 & 0 & 4 & 72 & 0  \\ \cline{2-6}
			& \phantom{0}4 & 0 & 0 & 9 & 0  \\ \cline{2-6}
			& \phantom{0}5 & 0 & 0 & 2 & 4  \\ \cline{2-6}
			& \phantom{0}6 & 0 & 2 & 2 & 0  \\ \cline{2-6}
			
			$|\mbox{Aut}|$ 
			
			& \phantom{0}10 & 0 & 3 & 9 & 16  \\ \cline{2-6} 			
			& 12 & 0 & 0 & 9 & 0  \\ \cline{2-6} 						
			& 20 & 1 & 0 & 0 & 0  \\ \cline{2-6} 			 
			& 42 & 1 & 0 & 0 & 0  \\ \cline{2-6} 
			& 48 & 1 & 0 & 0 & 0  \\ \cline{2-6} 
			& 100 & 1 & 0 & 0 & 0  \\ \cline{2-6} 		
			& 144 & 1 & 0 & 0 & 0  \\ 	
			\hline 
		\end{tabular} 
	\end{center}
	\caption{The number of near Youden rectangles with 
		\mbox{$n=10$} sorted by autotopism group order.}
	\label{ntab6}
\end{table}

\begin{table}
	\begin{center}
		\begin{tabular}{|c|r||r|r|r|r|}
			\hline
			\multicolumn{2}{|c||}{$(n,k,\lambda_1,\lambda_2)$} & (11,2,0,1)	& (11,3,0,1) & 
			(11,4,1,2)   \\ \hline 
			\multicolumn{2}{|c||}{\# NYR} & 6 & 852 & 1 598 \\ 
			\multicolumn{2}{|c||}{\# self-conjugate} & 6 & 501 & 865 \\ 			
			\hline\hline
			& \phantom{0}1 & 0 & 759 & 1 597  \\ \cline{2-5}
			& \phantom{0}2 & 0 & 75 & 0   \\ \cline{2-5}
			& \phantom{0}3 & 0 & 12 & 0  \\ \cline{2-5}
			& \phantom{0}6 & 0 & 5 & 0   \\ \cline{2-5}
			$|\mbox{Aut}|$ 				
			& \phantom{0}11 & 0 & 1 & 1  \\ \cline{2-5} 										
			& 22 & 1 & 0 & 0   \\ \cline{2-5} 	
			& 48 & 1 & 0 & 0   \\ \cline{2-5} 									
			& 56 & 1 & 0 & 0   \\ \cline{2-5} 									
			& 60 & 1 & 0 & 0   \\ \cline{2-5} 		
			& 180 & 1 & 0 & 0   \\ \cline{2-5} 					 						
			& 192 & 1 & 0 & 0   \\ \cline{2-5} 					
			\hline 
		\end{tabular} 
	\end{center}
	\caption{The number of near Youden rectangles with 
		\mbox{$n=11$} sorted by autotopism group order.}
	\label{ntab7}
\end{table}

\begin{table}
	\begin{center}
		\begin{tabular}{|c|r||r|r|r|r|}
			\hline
			\multicolumn{2}{|c||}{$(n,k,\lambda_1,\lambda_2)$}& (12,2,0,1)	& (12,3,0,1) & 
			(12,4,1,2)   \\ \hline 
			\multicolumn{2}{|c||}{\# NYR} & 9	& 11 598 & 262 \\ 
			\multicolumn{2}{|c||}{\# self-conjugate} & 9	& 6 183 & 167 \\ 			
			\hline\hline
			& \phantom{0}1 & 0 & 11 174 & 182  \\ \cline{2-5}
			& \phantom{0}2 & 0 & 333 & 46   \\ \cline{2-5}
			& \phantom{0}3 & 0 & 35 & 16  \\ \cline{2-5}
			& \phantom{0}4 & 0 & 13 & 4   \\ \cline{2-5}
			& \phantom{0}6 & 0 & 27 & 10   \\ \cline{2-5}
			& \phantom{0}8 & 0 & 2 & 0   \\ \cline{2-5}
			$|\mbox{Aut}|$ 			
			& 12 & 0 & 5 & 4   \\ \cline{2-5} 			
			& 18 & 0 & 3 & 0   \\ \cline{2-5}			
			& 24 & 1 & 4 & 0   \\ \cline{2-5}  
			& 54 & 1 & 0 & 0   \\ \cline{2-5} 		
			& 64 & 1 & 0 & 0   \\ \cline{2-5} 	
			& 70 & 1 & 0 & 0   \\ \cline{2-5} 				
			& 72 & 0 & 2 & 0   \\ \cline{2-5} 				
			& 120 & 1 & 0 & 0   \\ \cline{2-5} 				 						
			& 144 & 1 & 0 & 0   \\ \cline{2-5} 				
			& 216 & 1 & 0 & 0   \\ \cline{2-5} 				
			& 768 & 1 & 0 & 0   \\ \cline{2-5} 				
			& 388 & 1 & 0 & 0   \\ 
			\hline 
		\end{tabular} 
	\end{center}
	\caption{The number of near Youden rectangles with 
		\mbox{$n=12$} sorted by autotopism group order.}
	\label{ntab8}
\end{table}

\begin{table}
	\begin{center}
		\begin{tabular}{|c|r||r|r|}
			\hline
			\multicolumn{2}{|c||}{$(n,k,\lambda_1,\lambda_2)$}& (13,2,0,1)	& (13,3,0,1) \\ 
			\hline 
			\multicolumn{2}{|c||}{\# NYR} & 10	& 169 262 \\ 
			\multicolumn{2}{|c||}{\# self-conjugate} & 10	& 86 362 \\ 			
			\hline\hline
			& 1 & 0 & 167 541 \\ \cline{2-4}
			& 2 & 0 & 1 626 \\ \cline{2-4} 							
			$|\mbox{Aut}|$ & 3 & 0 & 69 \\ \cline{2-4}
			& 6  & 0& 24 \\ \cline{2-4}			
			& 13  & 0& 1 \\ \cline{2-4} 
			& 26  & 1& 0 \\ \cline{2-4}  											
			& 39  & 0& 1 \\  \cline{2-4}  											
			& 60  & 1& 0 \\ \cline{2-4}							
			& 72  & 1& 0 \\ \cline{2-4} 							
			& 80  & 1& 0 \\ \cline{2-4} 							
			& 84  & 1& 0 \\ \cline{2-4} 							
			& 144  & 1& 0 \\ \cline{2-4} 							
			& 252  & 1& 0 \\ \cline{2-4} 							
			& 300  & 1& 0 \\ \cline{2-4} 							
			& 320  & 1& 0 \\ \cline{2-4} 							
			& 1296  & 1& 0 \\ 
			\hline 
		\end{tabular} 
	\end{center}
	\caption{The number of near Youden rectangles with 
		\mbox{$n=13$} sorted by autotopism group order.}
	\label{ntab9}
\end{table}

We see that for fixed $n$ and growing $k$, at least for $n=7$, $n=11$ and
$n=13$, the number of near Youden rectangles grows faster 
than the number of Youden rectangles. The same holds for fixed $k$ and growing $n$.
As with Youden rectangles, most of the small near Youden 
rectangles have trivial autotopism groups.

We also note that for small $n$ we always find self-conjugate near Youden rectangles, even though their number is typically  smaller than
the number of all near Youden rectangles. 
\begin{question}
	Assume that a near Youden rectangle exists for  given $n$ and $k$. Does there always exist a self-conjugate near Youden rectangle for the same parameter combination?
\end{question}

%----------------------------------------------------------------------
%----------------------------------------------------------------------
\section{Relations to Triple Arrays and Related Row-Column Designs}
\label{sec_TA}
%----------------------------------------------------------------------
%----------------------------------------------------------------------

In this section, we present data and give some new theoretical results 
on the connection between Youden rectangles and double, triple and sesqui 
arrays.

%----------------------------------------------------------------------
\subsection{Theoretical background}
\label{subsec_TAinfo}
%----------------------------------------------------------------------

A $(v, e, \lambda_{rr}, \lambda_{cc}, \lambda_{rc} : r \times c)$ 
\emph{triple array} is an $r \times c$ array on $v$ symbols
satisfying the following conditions: 

\begin{description}
	\item[(TA1)]
		No symbol is repeated in any row or column.
	\item[(TA2)]
		Each symbol occurs $e$ times (the array is \emph{equireplicate}).
	\item[(TA3)]
		Any two distinct rows contain $\lambda_{rr}$ common symbols.
	\item[(TA4)]
		Any two distinct columns contain $\lambda_{cc}$ common symbols.
	\item[(TA5)]
		Any row and column contain $\lambda_{rc}$ common symbols.
\end{description}

If we relax condition (TA5), which is sometimes called 
\emph{adjusted orthogonality}, the array is called a \emph{double array}, 
and if condition (TA5) is expressly forbidden to hold, but all other 
conditions hold, we have a \emph{proper} double array. If we relax condition 
(TA4), the 
array is called a \emph{sesqui array}, and an array satisfying every condition
except (TA4) we call a \emph{proper} sesqui array. Our use of the term
\emph{proper} in this context should not be confused with how it
is sometimes used to stress that the blocks of a block design all 
have the same size.
Triple arrays were introduced by Agrawal~\cite{agrawal}, though examples
were known previously, and a good general introduction to triple and double
arrays is given in~\cite{McSorley}. Sesqui array were introduced in~\cite{sesqui}.

In discussing these designs we will find a new class of Latin rectangles 
useful.

\begin{definition}
	A Latin rectangle with integer parameters  $(n,k,\lambda)$, with $\lambda = \frac{k(k-1)}{n-1}$ 
	calculated from $n$ and $k$ as for a Youden rectangle, where the column 
	intersections have sizes $\lambda-1$, $\lambda$ and $\lambda+1$ is called a 
	\emph{triple-intersection} Latin rectangle.
\end{definition}

Note that these objects are defined only for such $(n,k,\lambda)$ that allow 
Youden rectangles with these parameters, and that we require the intersection
sizes to actually take on all these three values.

In~\cite{RagNag} it was suggested that triple arrays could be constructed
by taking an \emph{arbitrary} Youden rectangle, removing one column and all 
symbols present in that column, and then exchanging the roles of columns and 
symbols. The argument employed used distinct representatives. However, 
in~\cite{JW03}, the method was observed to be flawed, as the distinct 
representatives argument did not work, and an explicit counterexample was
given. For ease of reference, we phrase the construction as follows.

\begin{construction}\label{RagNag}
	For a given Youden rectangle $Y$ and a column $C_0$, let $A$ be the array
	received from $Y$ by first removing column $C_0$ and all occurrences in 
	$Y$ of symbols present in $C_0$, and then exchanging the roles of columns 
	and	symbols.
	
	We say that a Youden rectangle $Y$ is \emph{compatible} with an array $A$ 
	if $Y$ gives $A$ via this construction for some suitable choice of column,
	and we say that $Y$ \emph{yields} $A$.
\end{construction}

Construction~\ref{RagNag} was
further investigated in~\cite{NO15}, yielding among other the following 
results, reformulated to suit the terminology employed in the present paper:

\begin{theorem}[Proposition 2 in \cite{NO15}]\label{thm_alwayscolumn}
	Using Construction~\ref{RagNag}, any Youden rectangle  
	always yields an array that satisfies conditions (TA1), (TA2) and (TA4),
	regardless of the choice of column.
\end{theorem}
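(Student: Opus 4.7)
The plan is to fix a Youden rectangle $Y$ with parameters $(n,k,\lambda)$ and an arbitrary column $c$, and verify conditions (TA1), (TA2) and (TA4) for the resulting array $A$ by direct counting. The argument will rely on three properties of $Y$: the Latin property, equireplication with replication number $k$, and the SBIBD (linked) property that any pair of symbols co-occurs in exactly $\lambda$ columns.

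First I would determine the shape of $A$. Since each row of $Y$ is a permutation of all $n$ symbols, every row contains each of the $k$ symbols of $c$ exactly once; therefore, after deleting column $c$ and all occurrences of the symbols it contains, each row retains exactly $n-k$ entries. After the exchange of columns and symbols, $A$ is a $k \times (n-k)$ array whose new columns are indexed by the $n-k$ symbols of $Y$ not appearing in $c$ and whose entries are drawn from the $n-1$ original column labels other than $c$. Concretely, the entry of $A$ in row $i$ and new column $s$ is the unique original column of $Y$ in which symbol $s$ occurs in row $i$; this is well-defined because $s \notin c$ appears exactly once in row $i$, necessarily in a column different from $c$.

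Next I would check (TA1) and (TA2). For (TA1), distinctness within a row of $A$ follows from the Latin property applied to rows: distinct symbols $s, s'$ not in $c$ occupy distinct columns of $Y$ in any given row. Distinctness within a column of $A$ follows from the Latin property applied to columns: the $k$ occurrences of a fixed symbol $s$ across the rows of $Y$ lie in $k$ distinct columns. For (TA2), a fixed column $j \neq c$ of $Y$ meets $c$ in $\lambda$ symbols by the design condition, so it contributes exactly $k-\lambda$ symbols outside $c$; hence $j$ appears as an entry of $A$ exactly $k-\lambda$ times, so $A$ is equireplicate with $e = k - \lambda$.

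For (TA4), I would observe that the entries in the new column of $A$ indexed by $s$ are precisely the set of $k$ columns of $Y$ containing $s$, and similarly for $s'$. The intersection of these two sets is the number of columns of $Y$ containing both $s$ and $s'$, which equals $\lambda$ by the SBIBD property of the column-block design of $Y$; thus $\lambda_{cc} = \lambda$, independently of the choice of $s, s'$ and of $c$. There is no serious obstacle here: all three verifications reduce to straightforward bookkeeping. What is conceptually noteworthy is that the argument treats rows of $Y$ completely asymmetrically from symbols and columns, which is precisely the reason why the analogous statements for rows (conditions (TA3) and (TA5)) do not follow automatically and must be treated separately.
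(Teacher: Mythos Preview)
Your proof is correct. Note, however, that the paper does not supply its own proof of this statement: it is quoted verbatim as Proposition~2 from \cite{NO15} and used as a black box, so there is no in-paper argument to compare against. Your approach---direct bookkeeping using the Latin property for (TA1), the constant column-intersection size $\lambda$ for (TA2), and the equivalent pair-covering formulation of the SBIBD property for (TA4)---is the natural one and is essentially what the original reference does. One small remark: in verifying (TA4) you invoke that every pair of symbols co-occurs in exactly $\lambda$ columns, which is the alternative definition of a Youden rectangle the paper explicitly notes is equivalent to the constant-intersection definition; it may be worth flagging that equivalence explicitly, since the paper's primary definition is the intersection one.
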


In particular, when applied to a $(n,k,\lambda)$ Youden rectangle, 
Construction~\ref{RagNag} yields an equireplicate 
$r \times c = k \times (n-k)$ array on 
$v=n-1$ symbols, with replication number $e = k-\lambda$ and column 
intersection size $\lambda_{cc} = \lambda$. We see then that 
Construction~\ref{RagNag}
may never (by definition of a proper sesqui array) yield a proper sesqui 
array, but it is possible that we would get the \emph{transpose} of a proper 
$(n-k) \times k$ sesqui array.

\begin{theorem}[Theorem 3 in \cite{NO15}]\label{thm_projectiveDA1}
	Using Construction~\ref{RagNag}, any Youden rectangle with $\lambda = 1$ 
	always yields a proper double array for any choice of column.
\end{theorem}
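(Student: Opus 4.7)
The plan is to reduce to Theorem~\ref{thm_alwayscolumn} for the easy half and then treat what remains. Since Theorem~\ref{thm_alwayscolumn} already says that the $k \times (n-k)$ array $A$ obtained from an $(n,k,1)$ Youden rectangle $Y$ and any column $c$ automatically satisfies (TA1), (TA2), and (TA4), I would only need to verify (TA3) and to exhibit a pair (row,\,column) of $A$ whose intersection size is off-average, which breaks (TA5) and makes $A$ \emph{proper}. The central object in both steps will be
\[
A_i \;=\; \{\,j \ne c : Y_{i,j} \in c\,\},
\]
defined for each row $i$ of $Y$.

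\textbf{Verifying (TA3).} First I would note that a value $j$ appears in row $i$ of $A$ precisely when $j \ne c$ and $Y_{i,j} \notin c$, so inclusion-exclusion yields
\[
|\text{row } i \cap \text{row } i'| \;=\; (n-1) - |A_i| - |A_{i'}| + |A_i \cap A_{i'}|.
\]
The row-Latin condition on $Y$ forces $|A_i| = k-1$, since the $k$ symbols of $c$ each appear once in row $i$, one of them in column $c$ itself. The hypothesis $\lambda = 1$ then does the crucial work: every column $j \ne c$ meets $c$ in exactly one symbol, which sits in a single row of $j$, so $j$ belongs to exactly one $A_i$. Thus $\{A_1, \ldots, A_k\}$ partitions the $n-1$ non-$c$ columns, $|A_i \cap A_{i'}| = 0$, and $\lambda_{rr} = n - 2k + 1$ is constant.

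\textbf{Breaking (TA5).} For $s \notin c$ the column of $A$ indexed by $s$ is the $k$-set $C_s = \{j : s \in j\}$ of columns of $Y$ containing $s$, and the analogous translation yields $|\text{row } i \cap \text{col } s| = k - |A_i \cap C_s|$. Double-counting pairs $(j,s)$ with $j \in A_i$, $s \in j$, $s \notin c$ gives $\sum_{s \notin c} |A_i \cap C_s| = (k-1)^2 = n-k$, so the $s$-average of $|A_i \cap C_s|$ is exactly $1$; if (TA5) held, every $|A_i \cap C_s|$ would therefore have to equal $1$. But any two distinct $j, j' \in A_i$ satisfy $Y_{i,j} \ne Y_{i,j'}$, so their $c$-symbols are distinct, forcing the unique element of $j \cap j'$ (guaranteed by $\lambda = 1$) to lie outside $c$; this produces an $s^* \notin c$ with $|A_i \cap C_{s^*}| \ge 2$. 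Such a pair $j, j'$ exists whenever $|A_i| \ge 2$, i.e.\ $k \ge 3$, which is automatic since $\lambda = 1$, $k = 2$ would give $n = 3$, excluded by the paper's standing conventions.

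\textbf{Main obstacle.} The real content will be the observation that $\lambda = 1$ turns the family $\{A_i\}$ into a genuine partition of the non-$c$ columns of $Y$; once this is in hand, (TA3) is pure inclusion-exclusion and (TA5) is broken by the fact that the single shared symbol between two columns of the same $A_i$ cannot itself belong to $c$. I expect no deeper obstruction.
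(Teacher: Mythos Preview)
The paper does not actually prove this theorem; it is quoted verbatim as Theorem~3 of \cite{NO15} and used as a black box, so there is no in-paper argument to compare against. That said, your proposal is a correct and complete proof. The reduction to Theorem~\ref{thm_alwayscolumn} is the right starting point, the partition property of the sets $A_i$ is exactly what $\lambda=1$ buys and immediately gives (TA3), and your breaking of (TA5) is clean: the double count forces the average of $|A_i\cap C_s|$ to be $1$, while the observation that the unique common symbol of two columns in the same $A_i$ cannot lie in $c$ (since each such column meets $c$ only in its row-$i$ entry, and those entries differ) produces an $s^*$ with $|A_i\cap C_{s^*}|\ge 2$. The edge case $k=2$ is correctly disposed of by the paper's standing exclusion of $k=n-1$.
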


\begin{theorem}[Theorem 7 in \cite{NO15}]\label{thm_compatibleYR2}
	For any triple array $T$ with $v = r+c-1$ and $\lambda_{cc} = 2$, there
	exists a Youden rectangle (with $k = r$, $n=v+1$, $\lambda = 2$) that 
	yields $T$ using Construction~\ref{RagNag}.
\end{theorem}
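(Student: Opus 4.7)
The plan is to reconstruct the Youden rectangle $Y$ from $T$ by first swapping the roles of columns and symbols in $T$ to obtain a partially filled $r\times v$ array $P$, and then adjoining a new column with $r$ fresh symbols to complete $P$. A quick count shows $P$ has $c$ filled entries per row and $r-2$ per column, with $r-1$ empty entries per row and $2$ per column; so extending to $Y$ amounts to choosing a bijection between rows and the new symbols in the added column, together with a pair of new symbols filling the two empty cells of each column $s$ of $P$. Translating what the design condition $\lambda=2$ on $Y$ will demand, it reduces to the requirement that for every pair of columns $s,s'$ of $P$ the number of new symbols common to both equals $2-\mu(s,s')$, where $\mu(s,s')$ counts the columns of $T$ containing both $s$ and $s'$.

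The structural step is to consider the dual $T^*$ of $T$, whose points are the columns of $T$ and whose blocks are the symbols of $T$. The condition $\lambda_{cc}=2$ immediately makes $T^*$ a $(c,v,r,r-2,2)$-BIBD, and because $v=r+c-1$ its parameters coincide with those of the residual of a $(v+1,r,2)$-biplane. By the Hall--Connor theorem, every quasi-residual $\lambda=2$ design is residual, so $T^*$ extends to a biplane $D$ on $n=v+1$ points by adjoining a new block $B$ of $r$ fresh points and enlarging each existing block $s$ into $\widetilde B_s = s\cup\{b_1^s,b_2^s\}$ with $\{b_1^s,b_2^s\}\subset B$. A useful by-product is $\mu(s,s')\le 2$, since $|\widetilde B_s\cap\widetilde B_{s'}|=2$ for every two blocks of $D$. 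I then define $Y$ by taking its columns to be the blocks of $D$: in a non-$B$ column $\widetilde B_s$, row $i$ gets the unique old point $p$ with $T(i,p)=s$ when $s$ appears in row $i$ of $T$, and otherwise a new point from $\widetilde B_s\cap B$; column $B$ is filled with the $r$ new symbols in some order. The design condition for $Y$ then holds automatically, and the old placements are already row-consistent because column $p$ of $T$ lists $r$ distinct symbols across its $r$ rows, so each old point appears exactly once in each row of $Y$.

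The main obstacle is to show that the new symbols can be placed so that every row of $Y$ is a permutation of all $n$ points. For each $b\in B$, its $r-1$ occurrences in non-$B$ blocks must lie in $r-1$ distinct rows (each chosen from the two-element set $R_s$ of empty rows of column $s$), and within each non-$B$ block the two new symbols must bijectively occupy those two rows; the remaining row is then forced upon $b$ in column $B$. I would resolve this simultaneous matching problem by a Hall-type argument exploiting the $r!$ freedom in the ordering of column $B$ together with the two-way choice per non-$B$ block, or equivalently by invoking Smith--Hartley on the biplane $D$ and verifying that the partial ordering of $Y$ forced by $T$ extends to a full YR ordering, completing the construction.
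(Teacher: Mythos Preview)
The paper does not prove this theorem; it is simply quoted as Theorem~7 of \cite{NO15} and used as a black box. There is therefore no in-paper proof to compare against.

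Your overall strategy is nonetheless the right one, and it matches the argument in \cite{NO15}: pass to the RL-form $P$, observe that the dual column design $T^*$ is a $2$-design with the parameters of the residual of a $(v+1,r,2)$ biplane, invoke the Hall--Connor embedding theorem to realise $T^*$ as an actual residual of a biplane $D$, and then order the blocks of $D$ into a Youden rectangle whose restriction to the old points reproduces $P$. The counting in your first two paragraphs is correct, and the reduction of the design condition on $Y$ to $|\widetilde B_s\cap\widetilde B_{s'}|=2$ is exactly the point of going through $D$.

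Where your write-up is genuinely incomplete is the final paragraph. Smith--Hartley only guarantees \emph{some} Youden ordering of $D$; it does not say that the partial ordering already forced by $T$ on the old points extends, and ``a Hall-type argument exploiting the $r!$ freedom'' is a promissory note, not a proof. Concretely, you must show that the $v$ binary choices (which of the two empty rows of column $s$ receives which of $b_1^s,b_2^s$) together with the bijection in column $B$ can be made so that every new point meets every row exactly once. This \emph{is} provable---each row has exactly $r-1$ empty cells and each new point lies in exactly $r-1$ non-$B$ blocks, so the relevant bipartite incidence is regular and the required system of placements exists---but you need to actually carry out that matching/edge-colouring argument rather than gesture at it.
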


It was also conjectured in \cite{NO15} that Theorem~\ref{thm_compatibleYR2} 
would hold for triple arrays with $\lambda_{cc}$ larger than 2.

When applying Construction~\ref{RagNag} to near Youden 
rectangles or triple-intersection Latin rectangles, removing a 
column together with all the symbols present in that column will leave
a $k \times (n-1)$ equireplicate array with some empty cells. For a near
Youden rectangle, the empty cells are distributed 
so that the number of empty cells in a column is either $\lambda_1$ or 
$\lambda_2$. For a triple intersection Latin rectangle, the corresponding 
numbers of empty cells are $\lambda-1$, $\lambda$ or $\lambda+1$. 
If more than one value occurs for the number of empty cells in a column, 
the array will not be equireplicate after exchanging the roles of columns 
and symbols, since the number of appearances of a symbol in the resulting 
array will be the number of non-empty cells in the corresponding column. 

For near Youden rectangles Proposition \ref{prop_nYR_col_intersect} 
implies that the resulting array will never be equireplicate.
However, the following theorem follows rather easily from results 
in \cite{NO15}.

\begin{theorem}\label{thm_3lambda}
	For any $(v, e, \lambda_{rr}, 1, \lambda_{rc} : r \times c)$ triple 
	array $T$ with $v=r+c-1$, there is a compatible $r \times (v-c)$ 
	triple-intersection Latin rectangle $Y$ with column intersection 
	sizes $0$, $1$ and $2$.
\end{theorem}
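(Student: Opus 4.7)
The plan is to invert Construction~\ref{RagNag}. Given $T$, I form the partial array $T^*$ of size $r \times v$ on $c$ symbols by exchanging the roles of columns and symbols, i.e.\ setting $T^*[i,s] = j$ whenever $T[i,j] = s$. By the equireplicateness of $T$, each row of $T^*$ has $c$ filled and $v-c = r-1$ empty cells, while each column has $e$ filled and $r-e$ empty cells. The target is an $r \times (v+1)$ Latin rectangle $Y$ on $v+1$ symbols obtained from $T^*$ by appending a new column $c_0$ and filling every blank using $r$ fresh symbols $s'_1,\dots,s'_r$, in such a way that Construction~\ref{RagNag} applied to $Y$ with $c_0$ removed recovers $T$.

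For the completion step, I would model the blank cells of $T^*$ together with the $r$ cells of $c_0$ as the edges of a bipartite graph $G$ between rows and columns of $Y$. Every row vertex has degree $r$, $c_0$ has degree $r$, and every other column has degree $r-e$, so $\Delta(G) = r$ and K\"onig's edge-colouring theorem delivers a proper $r$-edge colouring; reading colour class $\ell$ as the positions of $s'_\ell$ yields the desired $Y$, and by construction removing $c_0$ and its symbols and then exchanging columns and symbols returns $T$.

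The main technical step is the column intersection analysis. Since the old symbol set $\{1,\dots,c\}$ and the new set $\{s'_1,\dots,s'_r\}$ are disjoint, the intersection of two old columns $a, b$ of $Y$ splits as the pair concurrence $\mu(a,b)$ in $T$ plus the number of shared new symbols. The first term is $0$ or $1$, for $\mu(a,b) \ge 2$ would force two columns of $T$ to share at least two symbols, violating $\lambda_{cc} = 1$. The key numerical identity is $r - e = 1$, which one derives by combining $e = rc/v$, $\lambda_{cc} = 1$, and $v = r+c-1$: direct manipulation forces $c = (r-1)^2$, hence $v = r(r-1)$ and $e = r-1$. Consequently each old column of $Y$ carries exactly one new symbol, so the shared-new contribution is also $0$ or $1$. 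Thus every old-old intersection lies in $\{0,1,2\}$, and the intersection of $c_0$ with any old column equals $r - e = 1$.

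It remains to show that all three intersection sizes $0, 1, 2$ actually occur. Let $\alpha_t$ count column pairs of $Y$ with intersection $t$; the standard double counts give $\alpha_0 + \alpha_1 + \alpha_2 = \binom{v+1}{2}$ and $\alpha_1 + 2\alpha_2 = (v+1)\binom{r}{2} = \binom{v+1}{2}$, so $\alpha_0 = \alpha_2$. If both vanished, then $Y$ would be a Youden rectangle with $\lambda = 1$, and by Theorem~\ref{thm_projectiveDA1} the array yielded by $Y$ would be a \emph{proper} double array, contradicting the hypothesis that $Y$ yields the triple array $T$. Hence $\alpha_0 = \alpha_2 > 0$, and $Y$ is genuinely a 3-$\lambda$ Latin rectangle with column intersection sizes $0, 1$ and $2$. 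The main subtlety is isolating the identity $r - e = 1$ from the parameter constraints; once that is done, Theorem~\ref{thm_projectiveDA1} closes the argument by ruling out the Youden outcome.
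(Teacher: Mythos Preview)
Your argument is correct and, in one respect, more complete than the paper's. The paper also forms the RL-form $R$ of $T$ and appends a new column $C_0$ carrying $r$ fresh symbols, but instead of invoking K\"onig's theorem it fills the blanks explicitly: since $\lambda_{cc}=r-e=1$ (quoted from \cite{McSorley}), each column of $R$ has exactly one empty cell and each row has $r-1$, so one simply places $s_i$ in row $i$ of $C_0$ and distributes the remaining $r-1$ symbols $S\setminus\{s_i\}$ over the blanks of row $i$ in any order. Because every old column has only one blank, no column repeat can arise, and this is considerably simpler than the edge-colouring argument (which is more general than the situation requires). For the intersection bound, the paper appeals to Theorem~\ref{thm_sumconstant}---that in the RL-form the number of common non-empty rows plus the number of common symbols is constantly $e$---to deduce that any two columns of $R$ share $0$ or $1$ symbols; you reach the same conclusion by the direct pair-concurrence argument from $\lambda_{cc}=1$, which avoids importing that lemma. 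Where your proof goes beyond the paper's is in verifying that all three intersection sizes $0,1,2$ genuinely occur: the paper's proof stops after showing the sizes lie in $\{0,1,2\}$, whereas your double count $\alpha_0=\alpha_2$ combined with Theorem~\ref{thm_projectiveDA1} cleanly rules out the degenerate Youden outcome.
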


The proof of this theorem uses the following result, where the 
RL-\emph{form} $R$ of a triple array $T$ mentioned in the cited 
source is the array that results from exchanging the roles of columns 
and symbols in $T$.

\begin{theorem}[Corollary 1 in \cite{NO15}]\label{thm_sumconstant}
	In the RL-form $R$ of a triple array $T$ with $v = r+c-1$, 
	for any two columns $C_1$ and $C_2$, the sum of 
	the number of common non-empty rows and the number of common 
	symbols of $C_1$ and $C_2$ is constant, namely $e$, the 
	replication number.
\end{theorem}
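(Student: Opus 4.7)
The plan is to run Construction \ref{RagNag} in reverse, starting from the RL-form $R$ of $T$. I would first pin down parameters: combining the identity $\binom{c}{2}\lambda_{cc}=v\binom{e}{2}$ with $ev=rc$, $v=r+c-1$ and $\lambda_{cc}=1$ forces $c=(r-1)^2$, $v=r(r-1)$ and $e=r-1$, so the nominal value $\lambda=r(r-1)/v$ for the target Latin rectangle $Y$ equals $1$. I would also observe that $\lambda_{cc}=1$ automatically yields $\mu(j_1,j_2)\leq 1$ for every pair of symbols, where $\mu(j_1,j_2)$ is the number of columns of $T$ containing both $j_1$ and $j_2$, since two such columns would share at least two symbols and violate $\lambda_{cc}=1$.

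Next I would use Theorem \ref{thm_sumconstant} to analyse $R$. Each column $j$ of $R$ has $e=r-1$ filled cells and exactly one empty cell, sitting in the unique row of $T$ from which symbol $j$ is absent; call this row $\rho(j)$, and note that $\rho^{-1}(i)$ has size $r-1$ for each row $i$. I would then build $Y$ as an $r\times(v+1)$ array by adjoining a new column filled with $r$ fresh symbols $s_1,\ldots,s_r$ in rows $1,\ldots,r$, and, for each row $i$, choosing a bijection $\phi_i:\rho^{-1}(i)\to\{s_k:k\neq i\}$ and placing $\phi_i(j)$ in the empty cell of column $j$. A direct check confirms that $Y$ is Latin. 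The new column meets any old column $j$ in exactly the single new symbol $\phi_{\rho(j)}(j)$, so that intersection equals $1$. Two old columns $j_1,j_2$ meet in $\mu(j_1,j_2)$ old symbols plus $[\phi_{\rho(j_1)}(j_1)=\phi_{\rho(j_2)}(j_2)]$ new symbols; since both summands are at most $1$, each such intersection lies in $\{0,1,2\}$. Compatibility with $T$ is then automatic: erasing the new column together with its $r$ new symbols returns $R$, and the Construction \ref{RagNag} swap then recovers $T$.

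The hard part will be showing that all three intersection values $0$, $1$ and $2$ actually occur for some choice of the bijections $\{\phi_i\}$, so that $Y$ is a genuine 3-$\lambda$ rectangle rather than a Youden or a near Youden rectangle. The family of admissible bijection tuples has $((r-1)!)^r$ members, and for $r\geq 3$ both $\mu=1$ pairs (the $c\binom{r}{2}$ in-column pairs) and $\mu=0$ pairs (an explicit count gives $r(r-1)(r-2)/2$ of them) are abundant. I would argue by a short local swap that some choice of $\{\phi_i\}$ creates at least one same-class pair with $\mu=1$, producing an intersection of size $2$, and at least one different-class pair with $\mu=0$, producing an intersection of size $0$; the only obstruction would be a resolution of the dual linear space of $T$ by the induced partition into classes, which the parameter count rules out.
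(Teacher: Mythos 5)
Your proposal does not prove the stated theorem at all --- it \emph{assumes} it. The statement you were asked to establish is Theorem~\ref{thm_sumconstant}: in the RL-form $R$ of a triple array $T$ with $v=r+c-1$, for any two columns the number of common non-empty rows plus the number of common symbols equals $e$. Your text instead says ``Next I would use Theorem~\ref{thm_sumconstant} to analyse $R$'' and then proceeds to build a $3$-$\lambda$ Latin rectangle from $R$ by adjoining a column of fresh symbols; that is an attempted proof of Theorem~\ref{thm_3lambda} (specialized to $\lambda_{cc}=1$), and indeed your second paragraph essentially reproduces the paper's proof of that theorem. In the paper, Theorem~\ref{thm_sumconstant} is precisely the imported key lemma (Corollary~1 of \cite{NO15}) on which the proof of Theorem~\ref{thm_3lambda} rests, so your argument is circular with respect to the actual target: the one place where you touch the content of Theorem~\ref{thm_sumconstant}, namely the claim that two old columns of $R$ share $\mu(j_1,j_2)\le 1$ symbols, you derive only an upper bound on one of the two summands from $\lambda_{cc}=1$, which is far weaker than the exact complementary identity (common rows) $+$ (common symbols) $=e$ that the theorem asserts, and which is needed, e.g., to know that columns of $R$ with empty cells in the same row share \emph{no} symbol.

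To prove Theorem~\ref{thm_sumconstant} itself you would have to translate the RL-form back: columns of $R$ correspond to symbols of $T$, the non-empty rows of a column of $R$ are the rows of $T$ containing that symbol, and the symbols appearing in a column of $R$ are the columns of $T$ containing that symbol. The theorem is therefore equivalent to saying that every pair of distinct symbols of a triple array with $v=r+c-1$ occurs together in rows and columns jointly exactly $e$ times --- the \emph{balanced grid} property, established for all $\lambda_{cc}$ (not just $\lambda_{cc}=1$) by McSorley, Phillips, Wallis and Yucas \cite{McSorley} via a genuine counting argument over all pairs, using the relations $\lambda_{cc}=r-e$, $\lambda_{rr}=c-e$ and $ev=rc$. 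Nothing in your parameter computation ($c=(r-1)^2$, $e=r-1$) or in the concluding ``local swap'' sketch addresses this; those parts concern only the existence of all three intersection sizes in the constructed rectangle, which is a separate question about Theorem~\ref{thm_3lambda}, and even there your final paragraph remains an unproved sketch.
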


\begin{proof}[Proof of Theorem~\ref{thm_3lambda}]
Since the parameters of $T$ are not all independent of each 
other (in particular, when $v=r+c-1$, it holds that $\lambda_{cc} = r-e$, 
see \cite{McSorley}), we may also observe that when exchanging the roles 
of symbols and columns in a $T$, there will be $r-e = \lambda_{cc}$ empty 
cells in each column in $R$ (the number of rows in $T$ in which the 
corresponding symbol does not appear). Reasoning similarly, there will be 
$r-\lambda_{cc}$ empty cells in each row of $R$ (the number of columns 
in $T$ where the corresponding symbol does not appear).

For $\lambda_{cc} = 1$, Theorem~\ref{thm_sumconstant} then implies that 
in $R$, each pair of columns shares $0$ symbols (when their empty 
cells lie in the same row) or $1$ symbol (when their empty cells lie in 
different rows).

With this information, given a triple array $T$, we can construct a Youden 
rectangle $Y$ compatible with $T$ by first exchanging the roles of columns 
and symbols in $T$, yielding the array $R$, 
and then adding a new column $C_0$ with a set $S$ of $r$ new symbols, 
$s_1, s_2, \ldots s_r$ in this order. To fill the empty cells in row 
$i$ in $R$, we then use the $r-1$ symbols $S \setminus \{s_i\}$,
in any order. This is the right number of symbols, since there are $r-1$ 
empty cells in every row of $R$, and there will be no repeated symbol in 
any row or column.

The intersections between columns in $Y$ may now have three different sizes. 
As observed above, pairs of columns in $R$ shared either $0$ symbols or $1$ symbol,
and after adding symbols to form $Y$, these numbers may have gone up by
at most $1$, since only one new symbol was added in each column. 
\end{proof}

An example of the construction in the above proof is given in 
Figure~\ref{fig_nearYconstr}.
Since Theorem~\ref{thm_3lambda} shows that the same transformation that
we applied to Youden rectangles could yield interesting row-column designs 
when applied to a triple-intersection Latin rectangle, we have also 
included this in our computational studies.

\begin{figure}[ht]
\begin{center}

\subfigure[A $4 \times 9$ triple array $T$.]{

	\begin{tabular}{|rrrrrrrrr|}
		\hline
		0	&2	&1	&4	&5	&6	&8	&7	&10 \\
		11	&3	&8	&5	&6	&7	&9	&1	&2	\\
		5	&7	&4	&9	&3	&11	&0	&10	&8	\\
		1	&0	&3	&2	&10	&4	&6	&9	&11	\\
		\hline
	\end{tabular}	
}

\bigskip
\hfill	

\subfigure[The corresponding array $R$ with the roles of symbols and rows in $T$ interchanged]{

	\begin{tabular}{|cccccccccccc|}
		\hline
		0	&2	&1	&	&3	&4	&5	&7	&6	&	&8	&	\\
			&7	&8	&1	&	&3	&4	&5	&2	&6	&	&0	\\
		6	&	&	&4	&2	&0	&	&1	&8	&3	&7	&5	\\
		1	&0	&3	&2	&5	&	&6	&	&	&7	&4	&8	\\	
		\hline
	\end{tabular}

}

\bigskip
\hfill	
	
\subfigure[A triple-intersection Latin rectangle compatible with $T$.]{

	\begin{tabular}{|rrrrrrrrrrrrr|}
		\hline
		9	&0	&2	&1	&10	&3	&4	&5	&7	&6	&11	&8	&12 \\
		10	&9	&7	&8	&1	&11	&3	&4	&5	&2	&6	&12	&0	\\
		11	&6	&9	&10	&4	&2	&0	&12	&1	&8	&3	&7	&5	\\
		12	&1	&0	&3	&2	&5	&9	&6	&10	&11	&7	&4	&8	\\
		\hline
	\end{tabular}

}
\end{center}

	\caption{Example of the construction in the proof of 
		Theorem~\ref{thm_3lambda}.}
	\label{fig_nearYconstr}
\end{figure}

%----------------------------------------------------------------------
\subsection{Computational Results for Youden Rectangles}
\label{subsec_TAdata}
%----------------------------------------------------------------------

In this section, we report on how many Youden rectangles yielded
triple arrays, proper double arrays, or transposes of proper sesqui arrays,
for all parameters $(n,k,\lambda)$ for which we have complete data,
except for $(21,5,1)$ Youden rectangles, where the computing time 
required was too great. 

We ran checks even for properties guaranteed by 
Theorems~\ref{thm_alwayscolumn}, \ref{thm_projectiveDA1} and 
\ref{thm_compatibleYR2}. Computational results were compatible with those 
of these theorems, which can be taken as an independent indication of 
the correctness of the computations.

%----------------------------------------------------------------------
\subsubsection{Triple Arrays}
%----------------------------------------------------------------------

Among the possible parameters for Youden rectangles for which we have
complete data, there are just two sets of parameters where there is 
a chance of producing triple arrays, namely $(11,5,2)$ and $(11,6,3)$.
All Youden rectangles with $\lambda = 1$ are excluded by 
Theorem~\ref{thm_projectiveDA1}, and $(7,4,2)$ would give a $4 \times 3$ 
triple array, the existence of which was excluded in~\cite{McSorley}.

In Table~\ref{tatab} for triple arrays and Table~\ref{dotab} for 
proper double arrays we give the following information:
\begin{enumerate} 
	\item The number of Youden rectangles that give a triple or 
		double array via Construction~\ref{RagNag} for 
		\emph{at least one} of its columns. 
	\item The total number of columns for which the construction
		yields a triple or double array (that is, Youden rectangles 
		counted with `multiplicities'). 
	\item The number of non-isotopic triple or proper double arrays we 
		observe appearing as a result of this operation.
\end{enumerate}

\begin{table}[ht]
\begin{center}
\begin{tabular}{|c|c|c|c|}
	\hline 
	$(n,k,\lambda)$& \# compatible YR &\# compatible columns & \# TA  \\ 
	\hline 	
	(11,5,2) & \phantom{0}52 & \phantom{0}52 & 7 \\ 
	\hline 	
	(11,6,3) & 826 & 826 & 7 \\ 
	\hline 	
\end{tabular} 
\end{center}
\caption{The number of Youden rectangles giving triple arrays.}
\label{tatab}
\end{table}

The $5 \times 6$ triple arrays (and by taking transposes, also the
$6 \times 5$ triple arrays) were completely classified into $7$
isotopism classes in~\cite{TA5x6}.
As predicted by Theorem~\ref{thm_compatibleYR2}, all $7$ triple arrays 
appear in Table~\ref{tatab}. 

\begin{observation}
	Each of the Youden rectangles with $n=11$ that yields a triple array 
	does so using a unique column.
\end{observation}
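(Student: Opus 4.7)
The proof is a direct reading of the computational data collected in Table~\ref{tatab}, so my plan is simply to explain how the table entries make the observation immediate. Concretely, I would recall from the description of the table that the second column counts Youden rectangles that yield a triple array via Construction~\ref{RagNag} for \emph{at least one} choice of column, while the third column counts ordered pairs $(Y, c)$ where $Y$ is a Youden rectangle and $c$ is a column of $Y$ such that deleting $c$ and applying the construction produces a triple array. By construction, the third number is exactly $\sum_Y N(Y)$, where $N(Y)$ denotes the number of columns of $Y$ that yield a triple array, and the second number is $\#\{Y : N(Y) \ge 1\}$.

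Next, I would compare the two entries for each parameter set: Table~\ref{tatab} gives $52=52$ for $(11,5,2)$ and $826=826$ for $(11,6,3)$. Since every Youden rectangle counted in the second column satisfies $N(Y)\ge 1$, and the sum of these $N(Y)$ equals the number of such $Y$, we must have $N(Y)=1$ for each compatible $Y$. This is exactly the statement of the observation.

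The only real work is the computational enumeration itself, which is already done; no further combinatorial argument is needed. The main obstacle, if one wished to go further, would be to give an \emph{a priori} structural reason why for $n=11$ no Youden rectangle can yield a triple array after deletion of two different columns; I would flag this as an open question suggested by the data rather than attempt it as part of the proof.
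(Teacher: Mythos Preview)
Your proposal is correct and matches the paper's approach: the observation is stated immediately after Table~\ref{tatab} without any separate proof, since the equality of the ``\# compatible YR'' and ``\# compatible columns'' entries for both $(11,5,2)$ and $(11,6,3)$ makes the conclusion immediate from the computational data. Your explanation of why this equality forces $N(Y)=1$ for each compatible $Y$ is exactly the intended reading.
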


The $7$ different triple arrays do not appear equally often. With classes 
numbered as in~\cite{TA5x6}, the triple arrays appear with the frequencies 
given in Table~\ref{tabclass}. The orders of the autotopism groups of the 
triple arrays (in the row labelled TA $|\mbox{Aut}|$) are taken 
from~\cite{TA5x6}. It seems that it is easier to produce those triple arrays
that have smaller autotopism groups.

\begin{table}[ht]
\begin{center}
\begin{tabular}{|c||c|c|c|c|c|c|c|}
	\hline 
	TA class & 1 & 2 & 3 & 4 & 5 & 6 & 7 \\ 
	\hline 
	TA $|\mbox{Aut}|$ & 60 & 12 & 12 & 6 & 4 & 3 & 3 \\
	\hline	\hline
	\# $5 \times 6$ YR & \phantom{0}3 & \phantom{0}5 & \phantom{0}5 & \phantom{00}8 
	& \phantom{0}11 & \phantom{0}10 & \phantom{0}10 \\ 
	\hline	
	\# $6 \times 5$ YR & 23  & 62  & 62  & 115  & 168  & 198  & 198  \\ 
	\hline 	
\end{tabular} 
\end{center}
\caption{The number of Youden rectangles giving each of the $7$ classes
	of $5 \times 6$ triple arrays.}
\label{tabclass}
\end{table}

We investigated the autotopism group orders of the Youden rectangles that
produced triple arrays, but we observed no obvious patterns.

%----------------------------------------------------------------------
\subsubsection{Proper Double Arrays}
%----------------------------------------------------------------------

We also checked which Youden rectangles produced proper double 
arrays, and the results are given in Table~\ref{dotab}. 
As predicted by Theorem~\ref{thm_projectiveDA1}, we see that all Youden 
rectangles with $\lambda = 1$ produced proper double arrays, for each 
column. For other values of $\lambda$, there is some indication that
the proportion of compatible Youden rectangles decreases with growing
$\lambda$, and that the most common case is that even in a compatible
Youden rectangle, only one column is compatible.

\begin{table}[ht]
\begin{center}
\begin{tabular}{|c|c|c|c|}
	\hline 
	$(n,k,\lambda)$ & \# compatible YR & \# compatible columns  & \# DA    \\ 
	\hline 	
	(7,3,1) & \phantom{00 000 00}1 & \phantom{00 000 00}7 & \phantom{00 00} 1 \\ 
	\hline 	
	(7,4,2) & \phantom{00 000 00}6 & \phantom{00 000 0}18 & \phantom{00 00}2 \\ 
	\hline 	
	(11,5,2) & \phantom{00 0}44 012 & \phantom{00 0}64 949 & 17 642 \\ 
	\hline 	
	(11,6,3) & 31 782 790 & 32 335 774 & 24 663 \\ 
	\hline 	
	(13,4,1) & \phantom{00 000 0}20 & \phantom{00 000 }260 & \phantom{00 }192 \\ 
	\hline 	
\end{tabular} 
\end{center}
\caption{The number of Youden rectangles giving proper double arrays.}
\label{dotab}
\end{table}

We note also that for parameter pairs $(n,k, \lambda_1)$, 
$(k,n-k,\lambda_2)$, the double arrays produced by the first have dimensions 
$k \times (n-k)$ and taking transposes yields an $(n-k) \times k$ double array,
and vice versa. Despite this, we see different numbers of double arrays
appearing through the construction both for the pair $(7,3,1), (7,4,2)$
and the pair $(11,5,2),(11,6,3)$. This would seem to indicate that there
are double arrays that cannot be constructed using Construction~\ref{RagNag}.

We note that on the basis of these data, we can answer in the negative a 
question posed in~\cite{NO15}, namely whether every Youden rectangle gives 
a double array using Construction~\ref{RagNag} for \emph{some} column. We 
phrase this as an observation. For examples for $v=11$, see 
Figure~\ref{fig_notcomp}.

\begin{observation}\label{obs_notcomp}
	There are Youden rectangles that cannot be used to produce double arrays 
	by removing a column and all the symbols in that column, and then 
	interchanging the roles of symbols and columns.
\end{observation}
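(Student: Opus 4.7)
The plan is to establish this observation as a direct consequence of the complete enumeration data summarised in Tables~\ref{ytab2}, \ref{tatab}, and \ref{dotab}. For the parameters $(11,5,2)$, Table~\ref{ytab2} reports a total of $79{,}416$ non-isotopic Youden rectangles, while Table~\ref{dotab} records only $44{,}012$ Youden rectangles compatible with some column for producing a proper double array, and Table~\ref{tatab} records only $52$ Youden rectangles compatible with some column for producing a triple array. Since every triple array is also a double array, the set of Youden rectangles yielding a double array (triple or proper) via Construction~\ref{RagNag} for at least one column is contained in the union of these two sets, of combined size at most $44{,}064$. Hence at least $79{,}416 - 44{,}064 = 35{,}352$ non-isotopic $(11,5,2)$ Youden rectangles produce no double array for any choice of column, which proves the claim. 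An even wider gap is available for the parameters $(11,6,3)$.

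For a self-contained witness, I would also extract one such Youden rectangle $Y$ from the saved data referenced in~\cite{Web3, Web4}, exhibit it explicitly, and verify directly that for each of its eleven columns $c$ the array produced by Construction~\ref{RagNag} violates condition (TA3). By Theorem~\ref{thm_alwayscolumn}, conditions (TA1), (TA2), and (TA4) are automatic for any Youden rectangle and any column choice, so the verification reduces to displaying, for each $c$, some pair of rows in the resulting $5 \times 6$ array whose intersection has an anomalous size.

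The main obstacle is administrative rather than mathematical: we must ensure that the isotopism bookkeeping used to produce Tables~\ref{ytab2}, \ref{tatab}, and \ref{dotab} is mutually consistent (in particular, that compatibility counts are recorded per isotopism class of $Y$, as the enumeration was), and, if an explicit witness is displayed, that the eleven independent row-intersection checks are carried out correctly. No genuine mathematical difficulty arises, because once the enumeration and the compatibility tallies are in hand, the observation is a short arithmetic comparison.
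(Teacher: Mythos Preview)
Your argument is correct and matches the paper's own justification, which is simply to point at the enumeration data in Table~\ref{dotab} (and implicitly Table~\ref{tatab}) and observe that the number of compatible $(11,5,2)$ Youden rectangles falls well short of the total in Table~\ref{ytab2}. Your explicit bookkeeping---bounding the union of triple-array-compatible and proper-double-array-compatible rectangles by $44{,}064$ and subtracting from $79{,}416$---is a slightly more careful version of the same step, and your remark about extracting an explicit witness goes a bit beyond what the paper actually does.
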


\begin{figure}[ht]
\begin{center}
\subfigure[An $11 \times 5$ Youden rectangle which does not	give a double array for any column]{
	\begin{tabular}{|ccccccccccc|}
		\hline
		0 & 1 & 2 & 3 & 4 & 5 & 6 & 7 & 8 & 9 & 10\\
		1 & 0 & 5 & 6 & 7 & 10 & 4 & 9 & 3 & 8 & 2\\
		2 & 5 & 0 & 9 & 8 & 3 & 10 & 4 & 6 & 1 & 7\\
		3 & 6 & 8 & 10 & 0 & 1 & 2 & 5 & 7 & 4 & 9\\
		4 & 7 & 9 & 0 & 10 & 8 & 5 & 3 & 2 & 6 & 1\\ 
		\hline
	\end{tabular}
}
\bigskip
\hfill	
	
\subfigure[An $11 \times 6$ Youden rectangle which does not 	give a double array for any column]{
	\begin{tabular}{|ccccccccccc|}
		\hline
		0 & 1 & 2 & 3 & 4 & 5 & 6 & 7 & 8 & 9 & 10\\
		1 & 0 & 9 & 4 & 7 & 8 & 10 & 5 & 3 & 6 & 2\\
		2 & 3 & 4 & 7 & 5 & 6 & 1 & 8 & 10 & 0 & 9\\
		3 & 6 & 7 & 1 & 8 & 4 & 5 & 9 & 2 & 10 & 0\\
		4 & 7 & 0 & 9 & 10 & 1 & 2 & 3 & 6 & 5 & 8\\
		5 & 8 & 6 & 10 & 0 & 9 & 7 & 2 & 4 & 3 & 1\\
		\hline
	\end{tabular}
}
\end{center}
	\caption{Examples for Observation~\ref{obs_notcomp}.}
	\label{fig_notcomp}
\end{figure}

%----------------------------------------------------------------------
\subsubsection{Transposes of Proper Sesqui Arrays}
%----------------------------------------------------------------------

Using Construction~\ref{RagNag}, we checked for transposes of proper sesqui 
arrays, and the results are presented in Table~\ref{sqtab}.

\begin{table}[ht]
\begin{center}
\begin{tabular}{|c|c|c|c|}
	\hline 
	$(n,k,\lambda)$ & \# compatible YR & \# compatible columns  & \# SA$^T$  \\ 
	\hline 	
	(7,3,1)  & \phantom{00 000 00}0 & \phantom{00 000 00}0 & \phantom{00 00}0 \\ 
	\hline 	
	(7,4,2)  & \phantom{00 000 00}1 & \phantom{00 000 00}3 & \phantom{00 00}1 \\ 
	\hline 	
	(11,5,2) & \phantom{00 000 00}0 & \phantom{00 000 00}0 & \phantom{00 00}0 \\
	\hline 	
	(11,6,3) & \phantom{00 00}8 234 & \phantom{00 00}8 234 & \phantom{00 0}34 \\ 
	\hline 	
	(13,4,1) & \phantom{00 000 00}0 & \phantom{00 000 00}0 & \phantom{00 00}0 \\
	\hline 	
\end{tabular} 
\end{center}
\caption{The number of Youden rectangles giving transposes of 
	proper sesqui arrays.}
\label{sqtab}
\end{table}

We observe that transposes of sesqui arrays are relatively rare, and that
the compatible $(11,6,3)$ Youden rectangles are only compatible for one
single column each. The one compatible $(7,4,2)$ Youden rectangle is given in 
Figure~\ref{7x4SAt}, together with the resulting transposed sesqui array.

\begin{figure}
\begin{center}	
\subfigure[The Youden rectangle.]{
	\begin{tabular}{|ccccccc|}
	\hline
	0 & 1 & 2 & 3 & 4 & 5 & 6 \\
	1 & 2 & 3 & 4 & 6 & 0 & 5 \\
	2 & 4 & 5 & 6 & 0 & 3 & 1 \\
	3 & 5 & 6 & 1 & 2 & 4 & 0 \\
	\hline \hline
	  & S & S &   &   & D & S \\
	\hline
	\end{tabular}
	\label{YA7SAta} 
}
\subfigure[The transposed sesqui array.]{
    \begin{tabular}{|ccc|}
		\hline
		0 & 1 & 4 \\
		1 & 4 & 2 \\
		2 & 3 & 5 \\
		3 & 5 & 0 \\
		\hline
	\end{tabular} 
	 \label{YA7SAtb}
}
\end{center}
	\caption{The unique $4\times 7$ Youden rectangle compatible with the
		transpose of a sesqui array, with compatible columns marked 
		by S, and a column compatible with a double array marked
		by D.}
	\label{7x4SAt}
\end{figure}

%----------------------------------------------------------------------
\subsubsection{Compatibility with Several Designs}
%----------------------------------------------------------------------

In our data, we found some specimens of Youden rectangles exhibiting 
very good compatibility properties.
To begin with, in Figure~\ref{7x4SAt}, we give a $(7,4,2)$ Youden 
rectangle which is compatible both with transposes of sesqui arrays,
and with a proper double array.

Further, some of the Youden rectangles that gave triple arrays
of dimensions $5 \times 6$ and $6 \times 5$
also gave proper double arrays for some other columns. Examples 
with maximum number of columns compatible with double arrays are
given in Figures~\ref{TAandDA5} and \ref{TAandDA6}.

\begin{figure}
	\begin{center}
		\begin{tabular}{|ccccccccccc|}
			\hline
			0 & 1 & 2 & 3 & 4 & 5 & 6 & 7 & 8 & 9 & 10\\
			1 & 0 & 3 & 7 & 6 & 8 & 9 & 10 & 5 & 2 & 4\\
			2 & 5 & 7 & 9 & 0 & 3 & 1 & 8 & 10 & 4 & 6\\
			3 & 6 & 8 & 10 & 9 & 1 & 2 & 4 & 0 & 7 & 5\\
			4 & 7 & 6 & 0 & 8 & 9 & 10 & 1 & 2 & 5 & 3\\
			\hline \hline
			  & T & D &   & D &   & D  &   &   &   & D\\
			\hline
		\end{tabular}       
	\end{center}
	\caption{Example of a $5\times 11$ Youden rectangle with maximum 
		compatibility with respect to triple and proper double arrays. The 
		column marked with T is compatible with a triple array, and
		the four columns marked with D are compatible with proper double
		arrays.}
	\label{TAandDA5}
\end{figure} 

\begin{figure}
	\begin{center}
		\begin{tabular}{|ccccccccccc|}
			\hline
				0 & 1 & 2 & 3 & 4 & 5 & 6 & 7 & 8 & 9 & 10\\ 
				1 & 0 & 4 & 10 & 5 & 7 & 8 & 2 & 3 & 6 & 9\\ 
				2 & 3 & 6 & 9 & 7 & 8 & 0 & 10 & 1 & 5 & 4\\ 
				3 & 6 & 9 & 5 & 10 & 0 & 4 & 1 & 2 & 8 & 7\\ 
				4 & 7 & 3 & 6 & 8 & 9 & 2 & 5 & 10 & 1 & 0\\ 
				5 & 8 & 7 & 0 & 3 & 2 & 10 & 6 & 9 & 4 & 1\\
			\hline \hline
			  &  &  & D  &  &   &   &   &   & T  & \\
			\hline
		\end{tabular}       
	\end{center}
	\caption{Example of a $6\times 11$ Youden rectangle with maximum 
		compatibility with respect to triple and proper double arrays. The 
		column marked with T is compatible with a triple array, and
		the column marked with D is compatible with a proper double
		array.}
	\label{TAandDA6}
\end{figure}

Even for Youden rectangles with $\lambda \neq 1$, we found Youden 
rectangles that for each column are compatible with some proper double array.

In Figure~\ref{YA7DA}, we give the unique $4\times 7$ Youden rectangle 
where each column is compatible with a double array. For any column, 
the resulting double array is isotopic to the one given  
in Figure~\ref{YA7DAb}.
The Youden rectangle in Figure~\ref{YA7DAa} has the largest autotopism 
group order, i.e., $21$, and the autotopism group acts transitively 
on the columns. As observed above, this Youden rectangle can therefore
be produced from a difference set. The double array has an autotopism 
group of order $3$, which acts transitively on the columns.

\begin{figure}
\begin{center}
\subfigure[The Youden rectangle.]{
	\begin{tabular}{|ccccccc|}
	\hline
	0 & 1 & 2 & 3 & 4 & 5 & 6\\
	1 & 2 & 4 & 5 & 3 & 6 & 0\\
	2 & 4 & 3 & 6 & 5 & 0 & 1\\
	3 & 5 & 6 & 1 & 0 & 2 & 4\\
	\hline
	\end{tabular}
	\label{YA7DAa}
}
%  \hfill
\subfigure[The double array.]{
    \begin{tabular}{|ccc|}
		\hline
		0 & 1 & 3 \\
		1 & 2 & 5\\
		2 & 4 & 0\\
		3 & 5 & 4\\
		\hline
	\end{tabular} 
		 \label{YA7DAb}
	}
\end{center}	

	\caption{The unique $4\times 7$ Youden rectangle where each 
		column is compatible with a double array.}
	\label{YA7DA}
\end{figure}

For $n=11$, the situation is a bit more complicated.
In Figure~\ref{YA11}, we give the two $5 \times 11$ examples we found, 
and in Figure~\ref{YA116}, we give the unique $6 \times 11$ example.

\begin{figure}
\begin{center}
	\subfigure[]{
		\centering
		\begin{tabular}{|ccccccccccc|}
			\hline 
			0 & 1 & 2 & 3 & 4 & 5 & 6 & 7 & 8 & 9 & 10\\ 
			1 & 2 & 5 & 6 & 7 & 3 & 8 & 9 & 4 & 10 & 0\\ 
			2 & 5 & 3 & 8 & 9 & 6 & 4 & 10 & 7 & 0 & 1\\
			3 & 6 & 8 & 7 & 0 & 4 & 9 & 1 & 10 & 2 & 5\\
			4 & 7 & 9 & 0 & 5 & 10 & 1 & 3 & 2 & 6 & 8\\ 
			\hline 
		\end{tabular} 
	\label{YA11a}
	}
		
	\hfill
	\bigskip
		
	\subfigure[]{
		\centering
		\begin{tabular}{|ccccccccccc|}
			\hline 
			0 & 1 & 2 & 3 & 4 & 5 & 6 & 7 & 8 & 9 & 10\\ 
			1 & 0 & 5 & 6 & 7 & 3 & 4 & 2 & 9 & 10 & 8\\ 
			2 & 5 & 0 & 8 & 9 & 4 & 10 & 6 & 1 & 3 & 7\\ 
			3 & 6 & 8 & 0 & 10 & 7 & 2 & 9 & 4 & 5 & 1\\ 
			4 & 7 & 9 & 10 & 0 & 8 & 5 & 3 & 6 & 1 & 2\\ 
			\hline 
		\end{tabular} 
	\label{YA11b}
	}
\end{center}
	\caption{The only two $5\times 11$ Youden rectangles where each 
		column is compatible with a proper double array.}
	\label{YA11}
\end{figure}  

\begin{figure}
	\begin{center}
		\begin{tabular}{|ccccccccccc|}
			\hline
			0 & 1 & 2 & 3 & 4 & 5 & 6 & 7 & 8 & 9 & 10\\
			1 & 2 & 6 & 4 & 7 & 8 & 3 & 5 & 9 & 10 & 0\\
			2 & 6 & 3 & 7 & 5 & 9 & 4 & 8 & 10 & 0 & 1\\
			3 & 4 & 7 & 8 & 9 & 0 & 5 & 10 & 1 & 2 & 6\\
			4 & 7 & 5 & 9 & 10 & 1 & 8 & 0 & 2 & 6 & 3\\
			5 & 8 & 9 & 0 & 1 & 6 & 10 & 2 & 3 & 4 & 7\\
			\hline
		\end{tabular}
	\end{center}
	\caption{The unique $6\times 11$ Youden rectangle where each column 
		is compatible with a proper double array.}
	\label{YA116}
\end{figure} 

The Youden rectangle in Figure~\ref{YA11a} has an autotopism group of 
order $55$, which acts transitively on the columns, and so comes from a 
difference set. All columns yield a double array isotopic to the one in 
Figure~\ref{DA11x5_1}. The autotopism group order of this double array is 
$5$, and it acts transitively on 5 of the columns, but keeps column 5 
fixed. 

\begin{figure}
\begin{center}
	\begin{tabular}{|cccccc|}
	\hline 
	0 & 1 & 2 & 3 & 4 & 6 \\ 
	1 & 2 & 5 & 6 & 7 & 8 \\ 
	2 & 5 & 3 & 8 & 9 & 4 \\ 
	3 & 6 & 8 & 7 & 0 & 9 \\ 
	4 & 7 & 9 & 0 & 5 & 1 \\
	\hline 
\end{tabular} 
\end{center}
\caption{The double array produced from the $5 \times 11$ Youden rectangle
	with autotopism group order $55$ given in Figure~\ref{YA11a}.}
\label{DA11x5_1}
\end{figure}

The Youden rectangle in Figure~\ref{YA11b} has an autotopism group of
order $60$, which acts transitively on two groups of columns, with $5$ 
and $6$ columns, respectively. 
All columns in the group with five columns yield the double array in 
Figure~\ref{DA11x5_2a}, and all columns in the group with six columns 
yield the double array in Figure~\ref{DA11x5_2b}.
The autotopism group order of these double arrays are $12$ and $10$, 
respectively, and the group action for the first one is transitive on the 
columns, while the autotopism group for the second one acts transitively
on all columns except the second column, which is fixed.

\begin{figure}
\begin{center}
\subfigure{
\begin{tabular}{|cccccc|}
\hline 
	0 & 1 & 2 & 5 & 6 & 9\\ 
	1 & 5 & 4 & 0 & 3 & 7\\ 
	2 & 3 & 6 & 4 & 0 & 8\\ 
	3 & 6 & 8 & 7 & 9 & 1\\ 
	4 & 7 & 5 & 9 & 8 & 2\\
	\hline 
\end{tabular} 
\label{DA11x5_2a}
}
\subfigure{
\begin{tabular}{|cccccc|}
\hline 
0 & 1 & 2 & 3 & 7 & 8\\ 
1 & 0 & 3 & 4 & 9 & 5\\ 
2 & 5 & 6 & 9 & 8 & 0\\ 
3 & 6 & 5 & 7 & 2 & 4\\ 
4 & 7 & 8 & 6 & 1 & 9\\
\hline
\end{tabular}
\label{DA11x5_2b}
}
\end{center}
\caption{The double arrays produced from the $5 \times 11$ Youden rectangle
	with autotopism group order $60$ given in Figure~\ref{YA11b}.}
\label{DA11x5_2}
\end{figure}

Finally, the Youden rectangle in Figure~\ref{YA116} has an autotopism 
group of order $55$, which acts transitively on the columns, and so comes 
from a difference set. All columns yield the same double array, given in 
Figure~\ref{DA11x6}, which has an autotopism group of order  5,
which acts transitively on the columns.

\begin{figure}
\begin{center}
	\begin{tabular}{|ccccc|}
	\hline 
0 & 1 & 2 & 3 & 5\\ 
1 & 2 & 6 & 4 & 8\\ 
2 & 6 & 3 & 7 & 9\\ 
3 & 4 & 7 & 8 & 0\\ 
4 & 7 & 5 & 9 & 1\\ 
5 & 8 & 9 & 0 & 6\\
	\hline 
\end{tabular}

\end{center}
\caption{The double array produced from the $6 \times 11$ Youden rectangle
	with autotopism group order $55$ given in Figure~\ref{YA116}.}
\label{DA11x6}
\end{figure}

It is interesting to note that the Youden rectangles in 
Figures~\ref{YA7DA}--\ref{YA116} that produce a single double array
(up to isotopism) for all columns have autotopism groups that act 
transitively on the columns. For an investigation of this topic,
we refer the interested reader to~\cite{TAdiff}.

%----------------------------------------------------------------------
\subsection{Computational Results for triple-intersection Latin Rectangles}
%----------------------------------------------------------------------

As we noted earlier, triple-intersection Latin rectangles both provide 
the missing 
source for the $\lambda=1$ triple arrays and could potentially lead to 
additional row-column designs. In order to investigate this connection we 
have also generated all triple-intersection Latin rectangles with $n=7$, 
but for larger $n$ we deemed full enumeration infeasible. The number of 
such rectangles 
is given in Table \ref{tlr}, sorted by the order of the autotopism groups. 

\begin{table}
	\begin{center}
		\begin{tabular}{|c|r||r|r|}
			\hline
			\multicolumn{2}{|c||}{$(n,k)$}	& (7,3) & (7,4) \\ 
			\hline 
			\multicolumn{2}{|c||}{\# TILR}	& 43 & 872\\ 
			\hline\hline
			& \phantom{0}1 & 18 & 756\\ \cline{2-4}
			$|\mbox{Aut}|$ 
			& \phantom{0}2 & 21 & 101\\ \cline{2-4}
			& \phantom{0}3 & 1 & 10\\ \cline{2-4}
			& \phantom{0}4 & 0 & 3\\ \cline{2-4}
			& \phantom{0}6 & 2 & 1\\ \cline{2-4}								
			& 14 & 1 & 1\\ 
			\hline 
		\end{tabular} 
	\end{center}
	\caption{The number of triple-intersection Latin rectangles (TILR) 
		with \mbox{$n=7$} sorted by autotopism group order.}
	\label{tlr}
\end{table}

In Table~\ref{tlr2} we give the number of such rectangles that are 
compatible with some proper double array. The maximum number of 
columns which are compatible with a double array is 2. Among the 
resulting non-isotopic double arrays for $(7,3,1)$ and $(7,4,2)$,
we see three different double arrays, when taking transposes into 
account. The rectangles in Figure~\ref{tda} are examples where the two 
compatible columns yield non-isotopic arrays, as indicated by subscripts.

\begin{table}[ht]
	\begin{center}
		\begin{tabular}{|c|c|c|c|}
			\hline 
			$(n,k,\lambda)$ & \# compatible TILR & \# compatible columns & \# DA \\
			\hline
			(7,3,1) & \phantom{00 000 00}6 & \phantom{00 000 00}8 & \phantom{00 0}2 \\ 
			\hline 	
			(7,4,2) & \phantom{00 000 0}97 & \phantom{00 000 }104 & \phantom{00 0}2 \\ 
			\hline 	 	
		\end{tabular} 
	\end{center}
	\caption{The number of triple-intersection Latin rectangles (TILR giving 
		proper double arrays.}
	\label{tlr2}
\end{table}

\begin{figure}
\begin{center}
\subfigure{
	\centering
	\begin{tabular}{|ccccccc|}
		\hline 
		0 & 1 & 2 & 3 & 4 & 5 & 6\\
		1 & 2 & 0 & 5 & 3 & 6 & 4\\
		2 & 3 & 4 & 6 & 0 & 1 & 5\\ 
		3 & 4 & 5 & 2 & 6 & 0 & 1\\
		\hline \hline
		  &   & D$_2$ & D$_1$ &   &   &  \\	
		\hline
	\end{tabular}
}
\subfigure{
\centering
		\begin{tabular}{|ccccccc|}
		\hline 
		0 & 1 & 2 & 3 & 4 & 5 & 6\\
		1 & 0 & 3 & 5 & 6 & 4 & 2\\
		2 & 3 & 4 & 6 & 0 & 1 & 5\\ 
		\hline \hline
		  &   &   & D$_1$ & D$_3$ &   &  \\	
		\hline
	\end{tabular}
}
\end{center}
\caption{Two examples of triple-intersection Latin rectangles with two 
	columns that 
	are compatible with non-isotopic proper double arrays. Subscripted D
	indicate the resulting non-isotopic double arrays, taking transposes into
	account.}
\label{tda}
\end{figure}

For triple-intersection Latin rectangles we have also found two examples 
which are 
compatible with proper sesqui arrays, as indicated in Table~\ref{tlr3}. 
We also found transposes of proper sesqui arrays in the case $4 \times 7$, 
as indicated in Table~\ref{tlr4}, and here the maximum number of compatible 
columns was three. We include all the resulting sesqui arrays 
here (in normalized form) in Figures~\ref{tsa} and \ref{tsa2}, 
since such arrays are scarce in the literature. We note that we only find
two non-isotopic sesqui arrays S$_1$ and S$_2$, when taking transposes into 
account, and that S$_1$ in fact recurs from Figure~\ref{YA7SAtb}.

\begin{table}[ht]
	\begin{center}
		\begin{tabular}{|c|c|c|c|}
			\hline 
			$(n,k,\lambda)$ & \# compatible TILR & \# compatible columns & \# SA   \\ 
			\hline 	
			(7,3,1) & \phantom{00 000 00}2 & \phantom{00 000 00}2 & \phantom{00 0}2 \\ 
			\hline 	
			(7,4,2) & \phantom{00 000 00}0 & \phantom{00 000 00}0 & \phantom{00 0}0 \\ 
			\hline 	 	
		\end{tabular} 
	\end{center}
	\caption{The number of triple-intersection Latin rectangles (TILR) giving 
		proper sesqui arrays.}
	\label{tlr3}
\end{table}

\begin{table}[ht]
	\begin{center}
		\begin{tabular}{|c|c|c|c|}
			\hline 
			$(n,k,\lambda)$ & \# compatible TILR & \# compatible columns & \# SA$^T$ \\ 
			\hline 	
			(7,3,1) & \phantom{00 000 00}0 & \phantom{00 000 00}0 & \phantom{00 0}0 \\ 
			\hline 	
			(7,4,2) & \phantom{00 000 0}73 & \phantom{00 000 0}78 & \phantom{00 0}2 \\ 
			\hline 	 	
		\end{tabular} 
	\end{center}
	\caption{The number of triple-intersection Latin rectangles giving 
		transposes of proper sesqui arrays.}
	\label{tlr4}
\end{table}

\begin{figure}
\begin{center}
\subfigure[S$_1$]{

	\begin{tabular}{|ccccccc|}
		\hline 
		0 & 1 & 2 & 3 & 4 & 5 & 6 \\ 
		1 & 2 & 0 & 4 & 5 & 6 & 3 \\ 
		2 & 3 & 4 & 5 & 6 & 1 & 0 \\ 
		\hline \hline
		  &   &   &   &   &   & S$_1$ \\	
		\hline
	\end{tabular}
	
	\vspace{12pt}
	
	\begin{tabular}{|cccc|}
		\hline 
		0 & 1 & 3 & 4 \\ 
		1 & 2 & 4 & 5 \\ 
		2 & 3 & 5 & 0 \\ 
		\hline 
	\end{tabular}
}
\hfill
\subfigure[S$_2$]{

	\begin{tabular}{|ccccccc|}
		\hline 
		0 & 1 & 2 & 3 & 4 & 5 & 6 \\ 
		1 & 2 & 4 & 0 & 5 & 6 & 3 \\ 
		2 & 3 & 1 & 4 & 6 & 0 & 5 \\
		\hline \hline
		  &   &   & S$_2$ &   &   &   \\	
		\hline
	\end{tabular}
	
	\vspace{12pt}
		
	\begin{tabular}{|cccc|}
		\hline 
		0 & 1 & 3 & 4 \\ 
		1 & 2 & 4 & 5 \\ 
		2 & 0 & 5 & 3 \\ 
		\hline 
	\end{tabular}

}
\end{center}
\caption{The triple-intersection Latin rectangles of size $3\times 7$ 
	that give proper sesqui arrays, together with the corresponding 
	sesqui arrays.}
\label{tsa}
\end{figure}

\begin{figure}
\begin{center}
\subfigure{

	\begin{tabular}{|ccccccc|}
		\hline 
		0 & 1 & 2 & 3 & 4 & 5 & 6 \\ 
		1 & 0 & 3 & 4 & 5 & 6 & 2 \\ 
		2 & 3 & 5 & 6 & 0 & 1 & 4 \\ 
		3 & 4 & 6 & 5 & 2 & 0 & 1 \\ 
		\hline \hline
		  &   &   & S$_1$ & S$_1$ & S$_2$ &   \\	
		\hline
	\end{tabular}
}
\hfill
\subfigure[S$_1^T$]{

	\begin{tabular}{|ccc|}
		\hline 
		0 & 1 & 4 \\ 
		1 & 4 & 2 \\ 
		2 & 3 & 5 \\ 
		3 & 5 & 0 \\ 
		\hline 
	\end{tabular}

}
\hfill
\subfigure[S$_2^T$]{
	\begin{tabular}{|ccc|}
		\hline 
		0 & 1 & 4 \\ 
		1 & 4 & 0 \\ 
		2 & 3 & 5 \\ 
		3 & 5 & 2 \\ 
		\hline 
	\end{tabular}

}
\end{center}
\caption{Example of a triple-intersection Latin rectangle of size $4\times 7$ 
	that gives transposes of proper sesqui arrays for three compatible
	columns, together with the corresponding non-isotopic transposed 
	sesqui arrays S$_1^T$ and S$_2^T$.}
\label{tsa2}
\end{figure}

%\CH{Array ~\ref{tsa}(a) yields the sesqui array
%\begin{tabular}{|cccc|}
%	\hline 
%	1 & 2 & 4 & 5 \\ 
%	0 & 1 & 3 & 4 \\ 
%	5 & 0 & 2 & 3 \\ 
%	\hline 
%\end{tabular} which is isotopic to \begin{tabular}{|cccc|}
%	\hline 
%	0 & 1 & 3 & 4 \\ 
%	1 & 2 & 4 & 5 \\ 
%	2 & 3 & 5 & 0 \\ 
%	\hline 
%\end{tabular}. The instance (b) yields the sesqui array \begin{tabular}{|cccc|}
%\hline 
%1 & 2 & 5 & 6 \\ 
%0 & 1 & 4 & 5 \\ 
%2 & 0 & 6 & 4 \\ 
%\hline 
%\end{tabular} which is isotopic to  \begin{tabular}{|cccc|}
%\hline 
%0 & 1 & 3 & 4 \\ 
%1 & 2 & 4 & 5 \\ 
%2 & 0 & 5 & 3 \\ 
%\hline 
%\end{tabular}}

%----------------------------------------------------------------------
%----------------------------------------------------------------------
\section{Concluding remarks}
\label{sec_concl}
%----------------------------------------------------------------------
%----------------------------------------------------------------------

With the computing time and storage available to us at present, we have 
exhausted the possibilities of complete enumeration of Youden rectangles.
A further line of inquiry might be to enumerate some restricted class
of Youden rectangles, satisfying some stronger conditions. Such conditions
would have to go beyond the structure of the symbol intersections between 
columns, since by only employing the balance condition, we can only 
distinguish between non-isotopic SBIBDs.

The new class of objects which we have named near Youden rectangles
(with only two column intersection sizes $\lambda_1$ and $\lambda_2$)
shows some promise with regard to two desirable properties. First, they
exist for far more parameter combinations than Youden rectangles. 
Second, they always have pairs of symbols covered either $\lambda_1$ or 
$\lambda_2$ times where $|\lambda_1 - \lambda_2| = 1$, so it may be 
expected that they perform reasonably well regarding statistical 
optimality. In a sense, they are as balanced as they can be. 
Investigating the statistical properties of these designs is beyond the
scope of this paper.

In relation to near Youden rectangles, we would like 
to pose the following question:
\begin{question}\label{q_PBYexist}
	For which combinations of $k$ and $n$ do near Youden rectangles exist?
\end{question}
As we noted earlier a result by Brown \cite{Brown88} implies that for 
$n=17, k=6$ a near Youden rectangle does not exist.

In relation to triple, double and sesqui arrays, we would like to pose the 
following questions:
\begin{question}\label{DAnotYR}
	For a given set of parameters, how many double arrays are there that 
	cannot be constructed from any Youden rectangle by removing a column 
	and all the symbols in that column,	and then exchanging the roles of 
	symbols and columns?
\end{question}

\begin{question}
	For a given set of parameters, can every double, triple, and 
	(transpose of) sesqui array be obtained from a Youden rectangle or 
	a triple-intersection Latin rectangle by Construction~\ref{RagNag}?		
\end{question}

Here one could of course extend the set of allowed intersection sizes in 
the Latin rectangle all the way up to $k$, so the focus is on whether a 
small span of intersection sizes suffices.

We hope to return to these questions in future work.

%----------------------------------------------------------------------
%----------------------------------------------------------------------
\section*{Acknowledgments}
%----------------------------------------------------------------------
%----------------------------------------------------------------------

The computational work was performed on resources provided by the 
Swedish National Infrastructure for Computing (SNIC) at 
High Performance Computing Center North (HPC2N).
This work was supported by the Swedish strategic research programme eSSENCE.    
This work was supported by The Swedish Research Council grant 2014-4897. 

The authors would like to thank the anonymous reviewers for their constructive criticism which improved the paper.

\end{document}